\documentclass[12pt,leqno]{amsart}
\usepackage{amssymb,amsmath,amsthm,amsxtra}
\usepackage[textsize=tiny]{todonotes}

\usepackage{graphicx}
\usepackage{color}
\usepackage{xcolor,import}

\oddsidemargin=-.0cm
\evensidemargin=-.0cm
\textwidth=16cm
\textheight=22cm
\topmargin=0cm
 \definecolor{darkblue}{RGB}{0,0,160}

\usepackage[lining]{libertine}   
\usepackage{cabin}
\usepackage[libertine]{newtxmath}
\usepackage[colorlinks=true,allcolors=darkblue]{hyperref}
\usepackage[backrefs,msc-links,nobysame,initials,non-compressed-cites]{amsrefs}

\usepackage[T1]{fontenc}

\usepackage{color}

%%%%%%%%%%%%%%%%%%%%%%%%%%%%%%%%%%%%%%%%%%%%
% DEFS
\def\e{{\rm e}}
\def\eps{\varepsilon}

\def\d{{\rm d}}

\def\R {\mathbb{R}}

\def\v {{\mathbf \Theta}}

\def\J {{\mathbb J}}

\def\M {{\mathrm M}}

\def\1 {{\mbox{\boldmath 1}}}

\def \and{\quad\text{and}\quad}

\newcommand{\cic}[1]{\mbox{\boldmath$#1$}}
\newcommand{\Exp}[0]{\mathbb{E}}

%%%%%%%%%%%%%%%%%%%%%%%%%%%%%%%%%%%%%%%%%%%%

%%%%%%%%%%%%%%%%%%%%%%%%%%%%%%%%%%%%%%%%%%%%
% BIBLIOGRAPHY

\def \no#1#2#3 {{\bf #1} (#3), #2.}
  %\no{Vol}{Pag}{Year}
\def \eds#1#2#3 {#1, #2, #3.}
  %\eds{Pub}{City}{Year}
%%%%%%%%%%%%%%%%%%%%%%%%%%%%%%%%%%%%%%%%%%%%

%%%%%%%%%%%%%%%%%%%%%%%%%%%%%%%%%%%%%%%%%%%%

\newcounter{counter2}
\numberwithin{equation}{section}
\numberwithin{counter2}{section}
\newtheorem{proposition}[counter2]{Proposition}

\newtheorem{theorem}[counter]{Theorem}

\newtheorem{corollary}[counter2]{Corollary}
\newtheorem{lemma}[counter2]{Lemma} 
 
\theoremstyle{definition}
\newtheorem{definition}{Definition}
\newtheorem*{remark*}{Remark}
\newtheorem*{warn*}{A word of warning}

\newtheorem{remark}[counter2]{Remark} 
\theoremstyle{plain}

%\numberwithin{equation}{section}

\def\XXint#1#2#3{{\setbox0=\hbox{$#1{#2#3}{\int}$}
\vcenter{\hbox{$#2#3$}}\kern-.5\wd0}}

%%%%%%%%%%%%%%%%%%%%%%%%%%%%%%%%%%%%%%%%%%%%%%
\begin{document}
%%%%%%%%%%%%%%%%%%%%%%%%%%%%%%%%%%%%%%%%%%%%%%%%%

\title[Finite order lacunary Hilbert transform]{A sharp estimate for the Hilbert transform along finite order lacunary sets of directions}

\author[F. Di Plinio]{Francesco Di Plinio} \address{\noindent Department of Mathematics, University of Virginia, Box 400137, Charlottesville, VA 22904, USA}
\email{\href{mailto:francesco.diplinio@virginia.edu}{\textnormal{francesco.diplinio@virginia.edu}}}
\thanks{F. Di Plinio was partially supported by the National Science Foundation under the grants NSF-DMS-1500449 and  NSF-DMS-1650810, by the Severo Ochoa Program SEV-2013-0323 and by Basque Government BERC Program 2014-2017}

\author[I. Parissis]{Ioannis Parissis}
\address{Departamento de Matem\'aticas, Universidad del Pais Vasco, Aptdo. 644, 48080 Bilbao, Spain and Ikerbasque, Basque Foundation for Science, Bilbao, Spain}

\email{\href{mailto:ioannis.parissis@ehu.es}{\textnormal{ioannis.parissis@ehu.es}}}
\thanks{I. Parissis is supported by grant  MTM2014-53850 of the Ministerio de Econom\'ia y Competitividad (Spain), grant IT-641-13 of the Basque Government, and IKERBASQUE}

\subjclass[2010]{Primary: 42B20. Secondary: 42B25}
\keywords{Directional operators, lacunary sets of finite order, Stein's conjecture}

%%%%%%%%%%%%%%%%%%%%%%%%%%%%%% ABSTRACT ABSTRACT ABSTRACT
\begin{abstract}Let $\v\subset S^1$ be a lacunary set of directions of order $D$. We show that the maximal directional Hilbert transform
\[
H_{\v} f(x)\coloneqq \sup_{v\in \v} \Big|\mathrm{p.v.}\int_{\R }f(x+tv)\frac{\d t}{t}\Big|
\]
obeys the bounds $\|H_{\v}\|_{L^p\to L^p}\simeq_{p,D} (\log\#\v)^\frac{1}{2}$, for all $1<p<\infty$.  For vector fields $\mathsf{v}_D$ with range in a lacunary set of of order $D$ and generated using suitable combinations of truncations of Lipschitz functions, we prove that the truncated Hilbert transform along the vector field $\mathsf{v}_D$,
\[
H_{\mathsf{v}_D,1} f(x)\coloneqq \mathrm{p.v.} \int_{  |t| \leq 1 } f(x+t\mathsf{v}_D(x)) \,\frac{\d t}{t}, 
\]
satisfies the bounds $\|H_{\mathsf {v}_D,1}\|_{L^p\to L^p}\lesssim_{p,D} 1$ for all $1<p<\infty$. These results extend previous bounds of the first author with Demeter, and of Guo and Thiele.
\end{abstract}
%%%%%%%%%%%%%%%%%%%%%%%%%%%%%% ABSTRACT ABSTRACT ABSTRACT

\maketitle

%%%%%%%%%%%%%%%%%%%%%%%%%%%%%% SECTION SECTION SECTION
\section{Introduction} Our subject is Hilbert transforms, and more general singular integrals and maximal averages along directions in the plane. The simplest example of such an operator is described by fixing a nonzero vector $v \in \R^2$ and considering the multiplier 
\begin{equation}\label{e.Hilb+}
H_v ^+f(x)\coloneqq  \int_{\R^2} \widehat f(\xi) \cic{1}_{[0,\infty)}(\xi\cdot v) \e^{ix\cdot \xi} \, \d \xi,\qquad x\in\R^2,
\end{equation}
acting initially on Schwartz functions $f$ defined on $\R^2$. Up to a linear combination with the identity operator, this operator coincides with the Hilbert transform along the direction $v$
\[
H_v f(x)\coloneqq  \mathrm{p.v.}\int_{\R} f(x+tv) \,\frac{\d t}{t}. 
\] 
Notice that $H_{v}$ is dilation invariant and thus we can always replace $v$ by its projection on $S^1$, which we identify with $v$ when $v$ is acting as a parameter for the definition of $H_v$.

The companion maximal operator is the maximal average of a function $f$ in the direction given by $v$
\[
\M_v f(x)\coloneqq \sup_{\eps>0}\frac{1}{2\eps}\int_{-\eps} ^\eps |f(x+tv)|\, \d t,\qquad x\in \R^2.
\]
Such operators arise for example if one applies the method of rotations to singular integrals in $\R^2$ which are given by $-2$-homogeneous kernels with mean zero on $S^1$. For a fixed direction $v\in S^1$, one-dimensional theory implies that $H_v,\M_v$ are bounded operators on $L^p(\R^2)$, $1<p<\infty$, with bounds independent of $v$. Of course the maximal operator $\M_v$ is also trivially bounded on $L^\infty(\R^2)$. See for example \cite[\S4.3]{Duo} for a relevant discussion.

Things become more interesting when one seeks for bounds for the corresponding maximal versions of these operators along sets of directions. To make this more precise, let $\v \subset S^1$ and consider the operators
\[
\M_\v f(x)\coloneqq \sup_{v\in \v} \M_{v}f(x),\quad H_\v f(x)\coloneqq \sup_{v\in\v}|H_{v}f(x)|,\qquad x\in \R^2.
\]
We naturally ask under which conditions on $\v$ these operators are bounded on $L^p(\R^2)$, at least for some $p\in (1,\infty)$. In order to discuss the answer we need the definition of lacunary sets.

%%%%%%%%%%%%%%%%%%%%%%%%%%%%%% SECTION SECTION SECTION
\subsubsection*{Lacunary sets of finite order } Following \cite{SS} we give below the definition of a $D$-lacunary set. To that end we first define the notion of a \emph{successor} set. Throughout the following definitions, $\lambda\in (0,1)$ is a fixed parameter.

%%%%%%%%%%%%%%%%%%%%%%%%%%%%%% DEFINITION DEFINITION DEFINITION
\begin{definition}[successor] In what follows,  $'\v,\v\subset S^1$ are closed sets of measure $0$. For $x,y\in S^1$ denote by  $\mathrm{dist}(x,y)$  the (geodesic) distance between $x$ and $y$, and by $\mathrm{dist}_{'\v}(y)$   the (geodesic) distance of $y$ from $'\v$. Then the set $\v$ is called a \emph{successor} of $'\v$  if for all $x,y\in\v$ with $x\neq y$ we have that $\mathrm{dist}(x,y)\geq (1/\lambda-1) \mathrm{dist}_{'\v}(x)$.
\end{definition}
%%%%%%%%%%%%%%%%%%%%%%%%%%%%%% DEFINITION DEFINITION DEFINITION

With this definition in hand we can describe lacunary sets of finite order.

%%%%%%%%%%%%%%%%%%%%%%%%%%%%%% DEFINITION DEFINITION DEFINITION
\begin{definition}[$D$-lacunary sets] A $0$-lacunary set is a single point set $\v=\{v_\infty\}$ for some $v_\infty \in S^1$. 
	
	A set $\v\in S^1$ is called \emph{clockwise lacunary} if $\v=\{v_k\}_{k=1} ^\infty$, all the directions in $\v$ are ordered in a clockwise fashion, and there exists a direction $v_\infty\in S^1$ and $0<\lambda<1$ such that for all $k\geq 1$ we have $v_k\neq v_\infty$ and
\[
\mathrm{dist}(v_{k+1},v_\infty)<\lambda \mathrm{dist}(v_k,v_\infty).
\]
We call $v_\infty$ the limit of $\v$. The definition of a \emph{counterclockwise lacunary} set $\v$ is completely analogous. A set $\v\subset S^1$ is called $1$-lacunary (or, simply, lacunary) if it can be written as the union of a clockwise lacunary set and a counterclockwise lacunary set with the same limit $v_\infty$ (allowing for one of the two sets in the union to be the empty set).
	
Given a positive integer $D\geq 1$ we say that $\v$ is $D$-lacunary set if $\v$ is a successor of a $(D-1)$-lacunary set $'\v$. It is important to note that the value of the successor constant $\lambda>0$ remains fixed throughout the inductive definition. Note that, with this definition, every $D$-lacunary set $\v$ is associated with a single element $v_\infty\in S^1$ that will be called \emph{the root} of $\v$.
\end{definition}
%%%%%%%%%%%%%%%%%%%%%%%%%%%%%% DEFINITION DEFINITION DEFINITION

Some remarks concerning the definition above are in order. Supposing that $\v$ is a $D$-lacunary set, then there exists a $(D-1)$-lacunary set $'\v$ and $\v$	is the successor of $'\v$. Let $'I_k$ denote the complementary arcs of $'\v$ in $S^1$. Then for each $k$ the set $\v\cap {'I_k}$ is contained in the union of a clockwise lacunary sequence converging to the right (as, clockwise) endpoint of $'I_k$ and a counterclockwise lacunary sequence converging to the left (as, counterclockwise) endpoint of $'I_k$. We will repeatedly use this structure of $D$-lacunary sets in what follows.

The canonical example of a clockwise lacunary set of directions with limit $0$ is the set $\{\pi 2^{-j}\}_{j\geq 1}$. On the other hand, the set
\[
\{\pi (2^{-j}+2^{-k})\}_{j\geq 1,k>j}
\]
is a $2$-lacunary set. 

With the definition of a $D$-lacunary set in hand, we now return to the questions of boundedness of $\M_\v$ and $H_\v$. If $\v$ is a $D$-lacunary set then $\M_v$ is bounded on $L^p(\R^2)$ for all $1<p<\infty$. For $D=1$ this is the main result of \cite{NSW}. This result was extended to lacunary sets of finite order in \cite{SS}. The converse implication is a striking result of Bateman, \cite{Bat}: if $\M_v$ is bounded on $L^p(\R^2)$ for some $p\in (1,\infty)$ then $\v$ is a $D$-lacunary set for some nonnegative integer $D$. In \cite{Katz}, Katz proved the quantitative sharp bounds for arbitrary finite sets of directions $\v$ 
\[
\|\M_\v\|_{L^2\to L^2}\lesssim \log \#\v, \quad \|\M_\v\|_{L^2\to L^{2,\infty}}\lesssim (\log \#\v)^\frac{1}{2},\quad\|\M_\v\|_{L^p\to L^p}\lesssim (\log \#\v)^\frac{1}{p},\quad p>2.
\]
Note that the result from \cite{SS} implies that $\|\M_\v\|_{L^p\to L^p} \lesssim_D 1$ whenever $\v$ is a $D$-lacunary set of directions.

For the Hilbert transform $H_\v$, Karagulyan showed in \cite{Karag} that for all set $\v\subset S^1$ we have
\[
\|H_\v \|_{L^2(\R^2)} \gtrsim  (\log{\#\v})^\frac{1}{2}.
\]
In particular, the Hilbert transform $H_{\v}$ is unbounded whenever $\v$ is an infinite set of directions. 

Concerning upper bounds for $H_\v$, keeping in mind the above result of Karagulyan, we only consider $\v\subset S^1$ with $\#\v<\infty$. Then the main result is due to the first author and Demeter \cite{DDP}: if $\v$ is any finite set of directions then $\|H_\v\|_{L^p\to L^p}\lesssim_{p} \log\#\v$ for all $2\leq p<\infty$.  The case $p=2$  of these norm bounds  was previously treated in \cite{Dem}. In \cite{DDP} it is also shown that under additional structural assumptions on $\v$, including for instance the equi-spaced case $\v=\{\e^{2\pi i \frac k N}: k=1,\ldots, N\}$, the quantitative estimate  improves to $\|H_\v\|_{L^p\to L^p}\lesssim_{p,\eps} (\log\#\v)^{\frac12+\eps}$ for $2<p<2+\eps$ and $\eps>0$ sufficiently small: the proof uses product-BMO type phase plane analysis.

In the same paper \cite{DDP} the authors prove that if $\v$ is a $1$-lacunary set then the upper bound improves to $\|H_\v\|_{L^p\to L^p}\lesssim_p  (\log\#\v)^\frac{1}{2}$ for all $1<p<\infty$. The first main result of this paper is the  $D$-lacunary version of the upper bounds for $H_\v$.
%%%%%%%%%%%%%%%%%%%%%%%%%%%%%% THEOREM THEOREM THEOREM
\begin{theorem} \label{t.D-lacunary:main} Let $\v\subset S^1$ be a $D$-lacunary set of directions.  The maximal directional Hilbert transform  $H_\v f(x)\coloneqq \sup_{v \in \v} |H_{v} f(x)|$
obeys  the bounds
\begin{equation} \label{t.D-lacunary:mainbd} 
c (\log \# \v)^{\frac12} \leq \|H_\v \|_{L^p(\R^2)} \leq C (\log \# \v)^{\frac12},\qquad  1 < p < \infty,
\end{equation}
with constants $c,C>0 $ depending only on $D,p$. 
\end{theorem}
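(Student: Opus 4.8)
The lower bound is, for the most part, available: at $p=2$ it is the result of Karagulyan \cite{Karag} quoted above, and for general $1<p<\infty$ I would reduce to it by passing to a subconfiguration. An elementary counting argument based on the structural description of $D$-lacunary sets --- write $\v$ as the disjoint union of the $1$-lacunary pieces $\v\cap{'I_k}$ indexed by the complementary arcs of a $(D-1)$-lacunary set $'\v$, and iterate --- produces a $1$-lacunary subset $\v'\subseteq\v$ with $\#\v'\gtrsim_D(\#\v)^{c_D}$ for some $c_D>0$; combined with the $L^p$ lower bound for $1$-lacunary sets (a minor variant of the construction in \cite{Karag}) this gives $\|H_\v\|_{L^p(\R^2)}\gtrsim_{p,D}(\log\#\v)^{\frac12}$. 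The real content is the upper bound, which I would prove by induction on $D$. The case $D=0$ is trivial and $D=1$ is the theorem of \cite{DDP} (with $p=2$ due to \cite{Dem}); what the inductive step actually calls upon is the $\ell^2$-valued form of that base case,
\[
\Big\|\big(\textstyle\sum_n|H_{\Omega_n}g_n|^2\big)^{\frac12}\Big\|_{L^p}\ \lesssim_p\ \sup_n(\log\#\Omega_n)^{\frac12}\Big\|\big(\textstyle\sum_n|g_n|^2\big)^{\frac12}\Big\|_{L^p},
\]
for arbitrary countable families $\{\Omega_n\}$ of $1$-lacunary sets, which follows from \cite{DDP} by routine Calder\'on--Zygmund reasoning.

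For the inductive step, fix $D\ge2$, let $\v$ be $D$-lacunary with $\#\v<\infty$, and let $'\v$ be a $(D-1)$-lacunary set of which $\v$ is a successor. Enumerate the (at most $\#\v$) complementary arcs $'I_k$ of $'\v$ meeting $\v$, with counterclockwise endpoint $\ell_k$ and clockwise endpoint $r_k$; since $'\v$ is closed, $\ell_k,r_k\in{'\v}$. By the structural remark following the definitions, $\v\cap{'I_k}=A_k\cup B_k$ with $A_k$ clockwise-lacunary converging to $r_k$ and $B_k$ counterclockwise-lacunary converging to $\ell_k$, so $A_k,B_k$ are $1$-lacunary of cardinality $\le\#\v$. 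For $v\in A_k$ the Fourier multiplier of $H_v-H_{r_k}$ is supported in the fixed double cone $\Gamma_k$ over $({'I_k})^\perp$, and similarly for $v\in B_k$ and $H_v-H_{\ell_k}$; let $S_k$ be a smooth frequency projection equal to the identity on $\Gamma_k$ and supported in a mild dilate of it, so that $(H_v-H_{r_k})f=(H_v-H_{r_k})S_kf$ for $v\in A_k$, and likewise on $B_k$. Writing $H_v=H_{r_k}+(H_v-H_{r_k})$ on $A_k$, $H_v=H_{\ell_k}+(H_v-H_{\ell_k})$ on $B_k$, and peeling off $\v\cap{'\v}$, I obtain the pointwise bound
\begin{align*}
H_\v f\ \le\ & H_{\v\cap{'\v}}f+\sup_k|H_{r_k}f|+\sup_k|H_{\ell_k}f| \\
&+\sup_k\sup_{v\in A_k}\big|(H_v-H_{r_k})S_kf\big|+\sup_k\sup_{v\in B_k}\big|(H_v-H_{\ell_k})S_kf\big|.
\end{align*}
The three ``coarse'' terms are controlled by the inductive hypothesis: $\v\cap{'\v}$, $\{r_k\}$ and $\{\ell_k\}$ are subsets of the $(D-1)$-lacunary set $'\v$, hence $(D-1)$-lacunary of cardinality $\lesssim\#\v$, so each contributes $\lesssim_{p,D}(\log\#\v)^{\frac12}\|f\|_{L^p}$.

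It remains to bound a ``fine'' term, say $\|\sup_k G_k\|_{L^p}$ with $G_k:=\sup_{v\in A_k}|(H_v-H_{r_k})S_kf|$. Since $A_k$ is $1$-lacunary with $\#A_k\le\#\v$ and $H_{r_k}$ is a bounded multiplier, $G_k\le H_{A_k}(S_kf)+|H_{r_k}S_kf|$; dominating $\sup_k$ by the $\ell^2$ square function, applying to the two summands respectively the $\ell^2$-valued base case and the direction-uniform vector-valued bound for a single Hilbert transform, and then a Littlewood--Paley square function estimate for the conical projections $\{S_k\}$, one gets
\[
\Big\|\sup_k G_k\Big\|_{L^p}\ \lesssim_p\ (\log\#\v)^{\frac12}\Big\|\big(\textstyle\sum_k|S_kf|^2\big)^{\frac12}\Big\|_{L^p}\ \lesssim_{p,D}\ (\log\#\v)^{\frac12}\|f\|_{L^p}.
\]
The final inequality --- boundedness on $L^p$, for every $1<p<\infty$ and with a constant independent of the number of arcs, of the conical square function attached to the complementary arcs of the $(D-1)$-lacunary set $'\v$ --- is exactly where the finite-order lacunary structure is used; it is the lacunarity of $'\v$ (as opposed to an arbitrary family of arcs) that puts the full range $1<p<\infty$ within reach, via the theory behind \cite{SS} and \cite{NSW}. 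Collecting the displays closes the induction.

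I expect the genuinely delicate points to be \emph{(i)} that conical Littlewood--Paley estimate, uniform in $p\in(1,\infty)$ and in the number of arcs, and \emph{(ii)} the bookkeeping that makes the coarse/fine splitting, the reduction to finitely many active arcs, and the vector-valued upgrade of the $D=1$ theorem rigorous. The structural reason the induction does not accumulate extra logarithms is that replacing $\sup_k$ by $(\sum_k|\cdot|^2)^{\frac12}$ costs nothing here: the pieces $S_kf$ are almost orthogonal in the quantitative sense furnished by (i), so the sharp exponent $\tfrac12$ is inherited from the base case $D=1$, with only the implicit constant deteriorating as $D$ grows.
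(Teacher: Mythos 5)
Your upper-bound scheme stands or falls with the $\ell^2$-valued form of the $D=1$ theorem,
\[
\Big\|\Big(\sum_n|H_{\Omega_n}g_n|^2\Big)^{\frac12}\Big\|_{L^p}\;\lesssim_p\;\sup_n(\log\#\Omega_n)^{\frac12}\,\Big\|\Big(\sum_n|g_n|^2\Big)^{\frac12}\Big\|_{L^p},
\]
which you assert follows from \cite{DDP} ``by routine Calder\'on--Zygmund reasoning.'' That is the genuine gap. Each $H_{\Omega_n}$ is a \emph{maximal} directional singular integral, sublinear and with scalar norm already growing like $(\log\#\Omega_n)^{1/2}$; it is not a Calder\'on--Zygmund operator, so neither the standard $\ell^2$-extension of a bounded CZ operator nor the Fefferman--Stein weighted trick (which works for positive operators such as $\M_v$, and is exactly how Lemma \ref{l.FS} is proved) applies, and no soft principle converts the scalar bound of \cite{DDP} into this vector-valued bound with the same constant. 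The paper is organized precisely so as never to need such an inequality: Proposition \ref{p.D-lacunary:CWW} (Chang--Wilson--Wolff) is applied once at the outset, extracting the single factor $(\log\#\v)^{1/2}$ and reducing matters to the log-free square function estimate \eqref{D-lacunary:SFE}; the induction on $D$ is run on \eqref{D-lacunary:SFE}, and inside it the maximal Hilbert transform over each $1$-lacunary piece acts on a radial Littlewood--Paley piece $S_kf$, where the single-annulus Lemma \ref{l.lemmamf} dominates it \emph{pointwise} by directional maximal functions, for which the vector-valued bound of Lemma \ref{l.FS} is available. Your decomposition contains no annular localization---your $S_k$ are cone projections---so no pointwise domination by a maximal function is possible, and the entire weight of your fine term rests on the unproved vector-valued inequality. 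Any honest proof of it would force you to introduce an annular decomposition, a CWW-type maximal multiplier estimate (applied globally, not cone by cone, lest you face an $\ell^2$-valued maximal-multiplier problem of the same nature), and Fefferman--Stein for directional maximal operators---that is, essentially to rebuild the paper's proof of \eqref{D-lacunary:SFE}.

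The remaining ingredients of your outline are sound: subsets of $(D-1)$-lacunary sets are $(D-1)$-lacunary, so the coarse terms are indeed handled by induction (do note that $'\v$ may be infinite even when $\#\v<\infty$, so restricting to arcs meeting $\v$, as you do, is needed for the cardinality bookkeeping), and the conical square function you flag as point (i) is available: it is \eqref{e.SS} of Sj\"ogren--Sj\"olin plus Khintchine, uniform in the number of arcs and valid for all $1<p<\infty$, exactly as used in the paper. For the lower bound, however, your route rests on two unproved claims: the extraction of a $1$-lacunary subset of size $(\#\v)^{c_D}$ (a one-point-per-arc selection is not obviously $(D-1)$-lacunary, so the iteration needs an argument) and an $L^p$ variant of Karagulyan's construction. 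The paper's route is shorter and unconditional: \cite{Karag} gives the $L^2$ lower bound for $\v$ itself, and Riesz--Thorin interpolation against the already-proved upper bound at an exponent on the other side of $2$ yields the $(\log\#\v)^{1/2}$ lower bound for every $1<p<\infty$.
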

%%%%%%%%%%%%%%%%%%%%%%%%%%%%%% THEOREM THEOREM THEOREM

The maximally truncated Hilbert transform in the direction $v\in S^1$ is 
\[
H_{v} ^*f(x)\coloneqq \sup_{\eps>0}\bigg|\int_{|t|>\eps} f(x-tv)\, \frac{\d t}{t}\bigg|
\]
and then we set $H_\v ^*f(x)\coloneqq \sup_{v\in\v}H_{v} ^*f(x)$. The previous theorem together with the one-dimensional Cotlar inequality for the maximal truncations of the Hilbert transform, see for example \cite[Lemma 3.5]{Duo},
\[
H_{v} ^* f(x) \leq \M_{v}(H_{v}f)(x)+C \M_{v}f(x),
\]
implies the corresponding estimate for $H_\v ^*$
\[
H_\v ^*f (x) \leq \M_\v(H_\v f)(x)+C\M_\v f(x).
\]
This together with the boundedness of $\M_\v$ on $L^p(\R^2)$ for all $p>1$ immediately imply the following corollary for the maximal truncations of $H_\v$.
%%%%%%%%%%%%%%%%%%%%%%%%%%%%%% COROLLARY COROLLARY COROLLARY
\begin{corollary}\label{c.truncated} Let $\v\subset S^1$ be a $D$-lacunary set of directions. We have 
\[ 
 \|H_\v ^* \|_{L^p(\R^2)} \leq C (\log \# \v)^{\frac12},  \qquad 1<p<\infty,
\]
where $C>0$ depends only on $D,p$.
\end{corollary}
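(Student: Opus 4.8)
The plan is to deduce this from Theorem~\ref{t.D-lacunary:main} by combining a pointwise Cotlar-type inequality with the $L^p$-boundedness of the maximal directional average $\M_\v$ along $D$-lacunary sets. No genuinely new ingredient is needed; the two substantive inputs are Theorem~\ref{t.D-lacunary:main} itself and the $D$-controlled bound for $\M_\v$ coming from the results extended in \cite{SS}.

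First I would record the one-dimensional Cotlar inequality for the maximally truncated Hilbert transform in a fixed direction: for every $v\in S^1$ and every $x\in\R^2$,
\[
H_{v}^*f(x)\leq \M_{v}(H_{v}f)(x)+C\M_{v}f(x),
\]
with an absolute constant $C$; see \cite[Lemma 3.5]{Duo}. This is a purely one-dimensional statement, since slicing $\R^2$ into lines parallel to $v$ reduces both sides to the classical Cotlar inequality on $\R$ applied to the restriction of $f$ to each such line. Next I would take the supremum over $v\in\v$. Since $|H_{v}f|\leq H_\v f$ pointwise and $\M_{v}$ is monotone with $\M_{v}g\leq \M_\v g$, we get $\M_{v}(H_{v}f)\leq \M_{v}(H_\v f)\leq \M_\v(H_\v f)$ and $\M_{v}f\leq\M_\v f$, hence
\[
H_\v^*f(x)\leq \M_\v(H_\v f)(x)+C\M_\v f(x),\qquad x\in\R^2.
\]

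Finally I would take $L^p(\R^2)$ norms for $1<p<\infty$. The result from \cite{SS} gives $\|\M_\v\|_{L^p\to L^p}\lesssim_{D,p}1$ whenever $\v$ is $D$-lacunary; applying this to the first term with input $H_\v f$ and to the second with input $f$, and then invoking the upper bound $\|H_\v\|_{L^p\to L^p}\leq C(\log\#\v)^{1/2}$ of Theorem~\ref{t.D-lacunary:main}, we obtain
\[
\|H_\v^*f\|_{p}\lesssim_{D,p}\|H_\v f\|_{p}+\|f\|_{p}\lesssim_{D,p}(\log\#\v)^{\frac12}\|f\|_{p},
\]
which is the asserted estimate. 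The only point requiring care is bookkeeping of the dependence of the constants on $D$ and $p$; there is no real obstacle once Theorem~\ref{t.D-lacunary:main} and the boundedness of $\M_\v$ are in hand.
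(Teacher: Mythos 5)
Your proposal is correct and follows essentially the same route as the paper: the one-dimensional Cotlar inequality $H_{v}^*f\leq \M_{v}(H_{v}f)+C\M_{v}f$, the pointwise passage to $H_\v^*f\leq \M_\v(H_\v f)+C\M_\v f$, and then the $L^p$-boundedness of $\M_\v$ from \cite{SS} combined with Theorem~\ref{t.D-lacunary:main}. The only addition is your explicit justification of the monotonicity step $\M_{v}(H_{v}f)\leq \M_\v(H_\v f)$, which the paper leaves implicit.
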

%%%%%%%%%%%%%%%%%%%%%%%%%%%%%% COROLLARY COROLLARY COROLLARY

The results and setup described above naturally connect to the Hilbert transform along vector fields. To make this precise, let $\v$ be a set of directions and consider the operator
\[
H_\v f(x)\coloneqq \sup_{v \in\v}\left| \mathrm{p.v.}\int_{\R} f(x+tv )\frac{\d t}{t}\right|.
\]
Then there exists a measurable function $\mathsf{v}:\R^2\to \v$ such that, up to a sign, $H_\v $ is pointwise equivalent to
\[
f\mapsto  \mathrm{p.v.}\int_{\R} f(x+t\mathsf{v}(x))\frac{\d t}{t}.
\]
As described in \cite{LacLi:tams}, there is a smooth vector field $\mathsf v$ such that the untruncated version considered above fails to be $L^2$-bounded even after precomposition with a smooth   restriction to a frequency annulus.
Such obstruction, caused  by    the large scales of the kernel, is obviated  naturally by considering instead the operator
\[
H_{\mathsf{v},\eps} f(x)\coloneqq\mathrm{p.v.} \int_{ |t| \leq \eps} f(x+t\mathsf{v}(x)) \,\frac{\d t}{t}  
\]
where $\mathsf{v}:\R^2\to \R^2$ is a sufficiently smooth vector field. The truncation at scale $\eps$ should be chosen in inverse relation to some modulus of smoothness of $\mathsf{v}$. Similarly, one is led to consider the truncated maximal average of a Schwartz function $f$ along the vector field $\mathsf{v}$ 
\[
\M_{\mathsf{v},\eps} f(x)\coloneqq\sup_{0<s\leq \epsilon} \frac{1}{2s}\int_{ |t| \leq s} |f(x+t\mathsf{v}(x))| \,\d t .
\]
The study of this operator is more naturally motivated by the question of differentiation of functions along vector fields, and then the truncation to small scales is immediately justified. 

In the context described above, Zygmund conjectured     the weak-$L^2$ boundedness of the maximal function $\M_{\mathsf{v},\eps}$  whenever $\mathsf{v}$ is a Lipschitz vector field and $\eps^{-1}\simeq \|\mathsf{v}\|_{\mathsf{LIP}}$. The analogous  boundedness property for  the Hilbert transform $H_{\mathsf{v},\eps}$ under the same assumptions on $\mathsf{v}$ and $\eps$ was later conjectured by Stein \cite{STP}. These conjectures have been   answered in the affirmative in the case that $\mathsf{v}$ is an \emph{analytic} vector field: by Bourgain, \cite{Bourgain}, for the maximal operator  and by Stein and Street, \cite{StStr}, for the Hilbert transform. These results were later recovered and extended by Lacey and Li to $C^{1+\eta}$-vector fields, $\eta>0$, satisfying an additional geometric condition which holds when $\mathsf{v}$ is analytic; see \cites{LacLi:mem,LacLi:tams}. For further refinements and additional results we point the interested reader to \cites{Bat1v,BatThiele,Guo1,GuoThiele,Guo} and to the references therein.

The second main result of the paper, Theorem~\ref{t.Liplac} below, extends the class of vector fields   along which the truncated Hilbert transform admits $L^p$-bounds to include those    with $D$-la\-cu\-na\-ry range and satisfying an additional assumption of Lipschitz flavor.  Our result extends \cite[Theorem 1.2]{GuoThiele} by Guo and Thiele to lacunary orders $D>1$. In the statement, and in what follows, we denote by $\lfloor x \rfloor$ the largest integer which is less than or equal to $x$.

%%%%%%%%%%%%%%%%%%%%%%%%%%%%%% THEOREM THEOREM THEOREM
\begin{theorem}\label{t.Liplac} Let $D$ be a positive integer. For $j=1,\ldots,D$ let $\lambda_j:\R^2\to (0,1]$ be Lipschitz functions with $\|\lambda_j\|_{\mathsf{LIP}}\leq 1$ and with the additional property that
\[
\lambda_{j}(x) \leq 2^{-5} \lambda_{j-1}(x), \qquad \forall x\in \R^2, \quad j=2,\ldots,D.
\]
Define $\mathsf{v}_D:\R^2\to S^1$ by
\begin{equation} \label{vD}
\mathsf{v}_D(x)\coloneqq  \prod_{j=1}^D \e^{2\pi i 2^{\lfloor \log_2 \lambda_j(x)\rfloor}}.
\end{equation}
Then the truncated Hilbert transform $H_{\mathsf{v}_D,1}$ satisfies the norm inequality
\begin{equation}\label{estimatemain1} 
	\|H_{\mathsf{v}_D,1} f\|_{L^p(\R^2)} \leq C  \|f\|_{L^p(\R^2)}, \qquad 1<p<\infty,
\end{equation}
 with  constant $C$ depending on $p,D$ only.
\end{theorem}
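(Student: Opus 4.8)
The plan is to reduce the vector-field estimate in Theorem~\ref{t.Liplac} to the single-scale, fixed-direction setting and then to run a square-function argument whose input is precisely the quantitative bound of Theorem~\ref{t.D-lacunary:main}. The first step is to discretize: write $\lambda_j(x)=2^{\lfloor \log_2\lambda_j(x)\rfloor}\cdot(\text{something in }[1,2))$, so that the exponents $n_j(x)\coloneqq \lfloor\log_2\lambda_j(x)\rfloor$ are integer-valued, piecewise constant on the (open) level sets $E_{\mathbf n}=\{x: n_j(x)=n_j,\ j=1,\dots,D\}$. The defining inequality $\lambda_j\le 2^{-5}\lambda_{j-1}$ forces $n_j\le n_{j-1}-5$ along the whole tuple, so the range of $\mathsf{v}_D$ is a $D$-lacunary set in the sense of the paper, with the separation of scales giving the successor constant. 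Because the $\lambda_j$ are $1$-Lipschitz, each level set $E_{\mathbf n}$ is, up to controlled overlap, a union of boxes of sidelength comparable to $2^{n_1}$ (the coarsest scale present there), which is the natural truncation scale: on $E_{\mathbf n}$ the kernel of $H_{\mathsf{v}_D,1}$ at scales $|t|\gtrsim 2^{n_1}$ effectively sees $\mathsf{v}_D$ as locally constant, while the small scales $|t|\lesssim 2^{n_1}$ are handled by a standard one-dimensional Hilbert-transform-along-a-fixed-line estimate followed by summation — this is the Lipschitz-truncation mechanism, essentially as in \cite{GuoThiele}.

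Having localized, the heart of the matter is the large-scale piece. Here I would decompose $H_{\mathsf{v}_D,1}$ into Littlewood–Paley pieces in the physical variable $t$, say $H_{\mathsf{v}_D,1}=\sum_{k\le 0} T_k$ where $T_k$ has kernel supported in $|t|\sim 2^k$, and group the $x$-dependence by the level sets $E_{\mathbf n}$. The point is that on $E_{\mathbf n}$, for $2^k\gtrsim 2^{n_1}$, the operator $T_k$ restricted to $E_{\mathbf n}$ agrees with the corresponding piece of the fixed-direction Hilbert transform $H_{\mathsf{v}_D(\mathbf n)}$ up to an error controlled by the Lipschitz oscillation of $\mathsf{v}_D$ on a ball of radius $2^k$, which is $\lesssim 2^{k}/2^{n_1}\cdot(\dots)$ — summable. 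The contribution that genuinely survives is $\mathbf 1_{E_{\mathbf n}}\,H_{\mathsf{v}_D(\mathbf n)}$, and summing (via the maximal operator, or via a stopping-time/square-function pairing with $\mathbf 1_{E_{\mathbf n}}$ disjoint) one sees that the full operator is dominated pointwise, modulo acceptable errors, by a maximal directional Hilbert transform over the $D$-lacunary set $\mathsf{v}_D(\R^2)$. Crucially one must check the number of directions actually realized in a fixed ball: by the Lipschitz hypothesis on the $\lambda_j$ and the truncation at scale $1$, only logarithmically many of the scales $2^{n_j}$ are relevant, so the "$\#\v$" that enters is a fixed finite number depending on $D$ alone, and Theorem~\ref{t.D-lacunary:main} then contributes only an $O_D(1)$ factor. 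This is exactly where the sharp $(\log\#\v)^{1/2}$ bound (as opposed to a cruder $\log\#\v$) is needed to close with a dimension-independent constant — a weaker bound would still give $O_D(1)$ here since the count is bounded by a constant, but morally the sharpness is what makes the localized count harmless.

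To organize all of this cleanly I expect the proof to proceed through a Rubio de Francia / Chang–Wilson–Wolff type square function: pick a Littlewood–Paley decomposition adapted to the lacunary scales $\{2^{n_j}\}$, pair $H_{\mathsf{v}_D,1}f$ against a test function, insert the decomposition, and bound the diagonal term by $\|(\sum|\Delta_j f|^2)^{1/2}\|_p\lesssim\|f\|_p$ and the relevant off-diagonal/error terms by the Lipschitz gain. For $p\ne2$ one interpolates or uses weighted inequalities, exactly as \cite{DDP} and \cite{GuoThiele} do, with the directional maximal function $\M_\v$ — bounded on all $L^p$, $p>1$, for $D$-lacunary $\v$ by \cite{SS} — providing the needed vector-valued and weighted control.

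The main obstacle will be the bookkeeping of the level sets $E_{\mathbf n}$ and the interaction between the $D$ different scales $2^{n_1},\dots,2^{n_D}$ simultaneously: unlike the $D=1$ case of \cite{GuoThiele}, the truncation scale, the Lipschitz-oscillation scale, and the lacunary gaps are all present at once, and one has to verify that the errors generated when "freezing" $\mathsf{v}_D$ on a ball telescope correctly across all $D$ factors of the product \eqref{vD} — in particular that freezing at the coarse scale $2^{n_1}$ does not corrupt the fine-scale structure encoded by $n_2,\dots,n_D$. I expect this to require an inductive argument on $D$, peeling off the coarsest Lipschitz function $\lambda_1$ first and applying the $(D-1)$-case to the remaining product, together with a careful use of the fact that the successor constant $\lambda$ is fixed (independent of $D$) throughout, which is what keeps the implicit constants from blowing up with $D$ in an uncontrolled way.
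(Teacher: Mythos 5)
There is a genuine gap at the central step of your argument: the claim that, after localization, ``the $\#\v$ that enters is a fixed finite number depending on $D$ alone,'' so that Theorem~\ref{t.D-lacunary:main} contributes only an $O_D(1)$ factor. This is false. The hypotheses give $\lambda_j:\R^2\to(0,1]$ with $\|\lambda_j\|_{\mathsf{LIP}}\leq 1$ but \emph{no positive lower bound}, so even on a single unit ball the function $\lambda_1$ may decrease from $1$ to an arbitrarily small $\eps$, and the exponents $\lfloor\log_2\lambda_j(x)\rfloor$ then run over $\sim\log(1/\eps)$ distinct integers; the number of directions of $\mathsf v_D$ realized at the truncation scale $1$ is unbounded, not $O_D(1)$. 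Consequently the pointwise domination of $H_{\mathsf v_D,1}$ by the maximal directional Hilbert transform over the range $\mathsf v_D(\R^2)$ (which is trivially true, no errors needed) is useless here: that maximal operator runs over an \emph{infinite} $D$-lacunary set, and by Karagulyan's lower bound any such maximal directional Hilbert transform is unbounded on $L^2$; Theorem~\ref{t.D-lacunary:main} and Corollary~\ref{c.truncated} only give $(\log\#\v)^{1/2}$, which diverges. The whole content of Theorem~\ref{t.Liplac} is precisely that the Lipschitz compatibility between the direction selection and the scales of the kernel removes this logarithmic loss; a route that funnels everything through Theorem~\ref{t.D-lacunary:main} cannot produce the uniform constant. (Your freezing-error estimate is also not summable as stated: for $|t|\sim 2^k\gtrsim 2^{n_1}$ the factor $2^k/2^{n_1}$ is $\gtrsim 1$, and since $\mathsf v_D$ is piecewise constant with jumps of angle comparable to $\lambda_D$, the ``oscillation on a ball of radius $2^k$'' does not furnish the decay you invoke.)

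For comparison, the paper never invokes Theorem~\ref{t.D-lacunary:main} in the proof of Theorem~\ref{t.Liplac}. After the Guo--Thiele reductions to frequency support in a cone and to the half-plane multipliers \eqref{e.T2:main1}, it runs an induction on $D$ using the pointwise recurrence behind Lemma~\ref{l.H^+rep}, which peels off the coarsest lacunary layer exactly as you anticipated; but the inductive engine is the vector-valued estimate of Proposition~\ref{p.tvv} for Hilbert transforms along the one-parameter Lipschitz-lacunary fields $\mathsf v_\theta$, combined with the Sj\"ogren--Sj\"olin cone multiplier bound \eqref{e.SS}. Proposition~\ref{p.tvv} itself is proved with directional Littlewood--Paley decompositions, the C\'ordoba--Fefferman vector-valued inequality, the Fefferman--Stein inequality of Lemma~\ref{l.FS}, and the key Lipschitz observation $|\lambda_\theta(x-t\e^{2\pi i\theta^\perp})-\lambda_\theta(x)|\leq|t|\leq 2^{-\tau}$, which forces $j_\theta^+\leq j_\theta^-+1$ and makes the error terms collapse to boundedly many scales pointwise --- this, rather than a bounded count of realized directions, is the correct quantitative expression of the heuristic you were reaching for. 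Several of your ingredients (induction on $D$, Littlewood--Paley plus Lipschitz gain, the $L^p$-boundedness of $\M_\v$ from \cite{SS}) do appear in the actual proof, but the reduction to the finite-set Theorem~\ref{t.D-lacunary:main} must be abandoned, not repaired.
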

%%%%%%%%%%%%%%%%%%%%%%%%%%%%%% THEOREM THEOREM THEOREM

%%%%%%%%%%%%%%%%%%%%%%%%%%%%%% REMARK REMARK REMARK
\begin{remark}
While we chose a specific $D$-lacunary sequence as the range of ${\mathsf v}_D$ in Theorem \ref{t.Liplac},  namely
\[
{\mathsf v}_D(\R^2) \subset \Bigg\{ \prod_{j=1}^D
\e^{2\pi i 2^{-k_j}}:\; (k_1,\ldots,k_D)\in \mathbb N^D, \; k_1 <  k_2<\ldots<k_D\Bigg\},
\]
the statement and proof of our theorem, as well as that of \cite{GuoThiele}, can be suitably modified to fit a vector field whose range is any given $D$-lacunary set. 
\end{remark}
%%%%%%%%%%%%%%%%%%%%%%%%%%%%%% REMARK REMARK REMARK
\begin{remark} The same Lipschitz-lacunary linearization of \cite{GuoThiele}, namely \eqref{vD} with $D=1$, was employed by Saari and Thiele in the context of $L^p$-bounds for  maximal multipliers with hyperbolic cross-type singularity \cite{ST}. In light of Theorem \ref{t.Liplac} it seems worthwhile   to investigate whether the results of \cite{ST} hold with the more general linearization \eqref{vD}.
\end{remark}

%%%%%%%%%%%%%%%%%%%%%%%%%%%%%% REMARK REMARK REMARK
\begin{remark} In both Theorems  \ref{t.D-lacunary:main} and \ref{t.Liplac}, the inequality for the Hilbert transform along $\mathsf{v}$ is reduced to a  Littlewood-Paley type square function estimate, explicitly in the first case (see Proposition \ref{p.D-lacunary:CWW}, using a version of the Chang-Wilson-Wolff inequality) or implicitly in the second, relying upon the Lipschitz character of $\mathsf{v}$. This reduction is a common theme in the literature \cite{BatThiele,Dem,DDP,LacLi:mem}; on the other hand, the proofs of boundedness for such square functions rely upon certain $L^p$ estimates for   maximal functions oriented along the direction of the vector fields. Our results exhaust the  cases where, due to the finite order lacunarity of the range,  such maximal functions  have a full range of $L^p$-bounds without any further assumption  on the vector field (see \eqref{e.direcmax} below).   \end{remark}
%%%%%%%%%%%%%%%%%%%%%%%%%%%%%% REMARK REMARK REMARK

%%%%%%%%%%%%%%%%%%%%%%%%%% ACKNOWLEDGMENTS^3
\subsection*{Acknowledgments}
This work was completed while first author was in residence at the Basque Center for Applied Mathematics (BCAM), Bilbao as a visiting fellow. The author gratefully acknowledges the kind hospitality of the staff and researchers at BCAM and in particular of Carlos P\'erez. The authors thank Pedro Caro for interesting exchanges on the relation between directional singular integrals and rotational smoothing.
 
%%%%%%%%%%%%%%%%%%%%%%%%%%%%%% SECTION SECTION SECTION
\section{Auxiliary results}
In this section, we collect some  tools which will be employed within the proofs of the main results.   We begin with an important section that concerns the notations and assumptions used in this paper. 

%%%%%%%%%%%%%%%%%%%%%%%%%%%%%% SECTION SECTION SECTION
\subsection{Some conventions concerning lacunary sets of directions}\label{s.conv} Let $\v\subset S^1$ be a $D$-lacunary set of directions. We identify a direction $v\in\v$ with its argument, namely $v\coloneqq e^{2\pi i \theta_v}$. Then $v^\perp=e^{2\pi i (\theta_v+1/4)}\eqqcolon e^{2\pi i \theta_{v^\perp}}$. By a slight abuse of notation we will also write $\theta^\perp\coloneqq \theta+\frac{1}{4}$ so that $\theta_{v^\perp}=\theta_v ^\perp$. Here we remember that a $D$-lacunary set $\v$ is a successor of a $(D-1)$-lacunary set and,  denoting by $'I_k$ the complementary arcs of $'\v$, for each $k$ we have that $\v\cap {'I_k}$ is contained in the union of two lacunary sequences, one clockwise and one counterclockwise. However, by symmetry, it suffices to consider only the clockwise ordered sequences. Thus, all lacunary sets in this paper will be in clockwise order with respect to their limits.

A further convention that we will use in this paper is that the $D$-lacunary sets we consider will lie in the first quadrant so that for each $v\in\v$ we have $0\leq \theta_v\leq 1/4$. Without loss of generality we assume throughout the paper that the root $v_\infty$ of $\v$ is $v_\infty=(1,0)=e^{2\pi i 0}$.

A $1$-lacunary set with lacunarity constant $\lambda$ can be split into roughly $\log (\lambda/\lambda')$ pairwise disjoint $1$-lacunary sets with the same limit and lacunarity constant $\lambda'<\lambda$. Inductively, a $D$-lacunary set with constant $\lambda$ can be split into $O_{\lambda',\lambda,D}(1)$ pairwise disjoint $D$-lacunary sets with constant $\lambda'$. Given a $D$-lacunary set $\v=\cup_\ell \v_\ell$, the maximal Hilbert transform $H_\v$ can be trivially controlled from above by $\sum_\ell H_{\v_\ell}$. As we can tolerate constants depending on $\lambda$ and $D$, we will, without particular mention, assume that the lacunarity constant of a given $D$-lacunary set is sufficiently small, whenever needed. Note that this splitting will only introduce constants depending on $\lambda$ and $D$.

%%%%%%%%%%%%%%%%%%%%%%%%%%%%%% SECTION SECTION SECTION
\subsection{A representation of the Hilbert transform in terms of cone projections}\label{s.cones}
We continue with the introduction of a notation that we will consistently use throughout the paper. Given a $D$-lacunary set $\v=\{v_j\}_{j\geq 1}$ we always write $'\v=\{u_\tau\}_{\tau\geq 1}$ if $\v$ is a successor of $'\v$. Letting again $'I_\tau$ be the complementary arcs of $'\v$, we  have that 
\begin{equation} \label{eq:primeI}
\v=\bigcup_{\tau\geq 1}(\v\cap {'I_\tau})\eqqcolon \bigcup_{\tau\geq 1}\v_\tau
\end{equation}
where each set $\v_\tau$ is a $1$-lacunary set with limit $u_\tau$. We now agree to write 
\[
\v_\tau=\{v_{1,\tau},v_{2,\tau},\ldots,v_{j,\tau},\ldots\}
\]
where $v_{1,\tau} \coloneqq u_{\tau-1}$ for $\tau\geq 2$ and $\lim_{j\to \infty} v_{j,\tau}=u_\tau$. For $\tau=1$ we just use the convention $u_0\coloneqq v_1$ and the same definition makes sense.

Given a $D$-lacunary set $\v=\{v_j\}_j$ we denote by $\{C_j\}_j$ the frequency cones in the third quadrant defined by the complementary arcs of $\v$. More precisely we set
\[
C_j\coloneqq\{\xi\in\R^2:\, \xi\cdot v_j  \geq 0,\, \xi\cdot v_{j+1}<0, \, \xi_1<0,\, \xi_2>0\}.
\]
We often work with $\v$ being a successor of $'\v=\{u_\tau\}_{\tau}$, so that both $\v$ and $'\v$ appear in the course of an argument. In order to avoid ambiguities, we use the redundant notation
\[
\tilde C_\tau \coloneqq\{\xi\in\R^2:\, \xi\cdot u_\tau \geq 0,\, \xi\cdot u_{\tau+1}<0, \, \xi_1<0,\, \xi_2>0\}
\]
for the coarser cones corresponding to the complementary arcs of $'\v$. Finally, as $\v=\cup_{\tau\geq 1} \v_\tau$ as in \eqref{eq:primeI} we will use the doubly-indexed cones $C_{j,\tau}$ to denote those cones among the collection $\{C_j\}_{j}$ that are contained in $\tilde C_\tau$; that is, we set 
\[
C_{j,\tau}\coloneqq\{\xi\in\R^2:\, \xi\cdot v_{j,\tau} \geq 0,\, \xi\cdot v_{j+1,\tau}< 0, \, \xi_1<0,\, \xi_2>0\}.
\]
These should be understood as regions in the frequency plane; see also Figure~\ref{f.lac} below. With these notations in hand we then have the identity
\[
\{\xi\in \R^2: \, \xi_1<0,\, \xi_2>0,\, \xi\cdot v_1\geq 0\} = \bigcup_{\tau\geq 1}\bigcup_{j\geq 1}C_{j,\tau}=\bigcup_{\tau\geq 1} \tilde C_{\tau-1}.
\]

%%%%%%%%%%%%%%%%%%%%%%%%%%%%%% FIGURE FIGURE FIGURE
\begin{figure}[htb]
\centering
 \def\svgwidth{330pt}
\begingroup%
  \makeatletter%
  \providecommand\color[2][]{%
    \errmessage{(Inkscape) Color is used for the text in Inkscape, but the package 'color.sty' is not loaded}%
    \renewcommand\color[2][]{}%
  }%
  \providecommand\transparent[1]{%
    \errmessage{(Inkscape) Transparency is used (non-zero) for the text in Inkscape, but the package 'transparent.sty' is not loaded}%
    \renewcommand\transparent[1]{}%
  }%
  \providecommand\rotatebox[2]{#2}%
  \ifx\svgwidth\undefined%
    \setlength{\unitlength}{331.83959961bp}%
    \ifx\svgscale\undefined%
      \relax%
    \else%
      \setlength{\unitlength}{\unitlength * \real{\svgscale}}%
    \fi%
  \else%
    \setlength{\unitlength}{\svgwidth}%
  \fi%
  \global\let\svgwidth\undefined%
  \global\let\svgscale\undefined%
  \makeatother%
  \begin{picture}(1,0.55407278)%
    \put(0,0){\includegraphics[width=\unitlength,page=1]{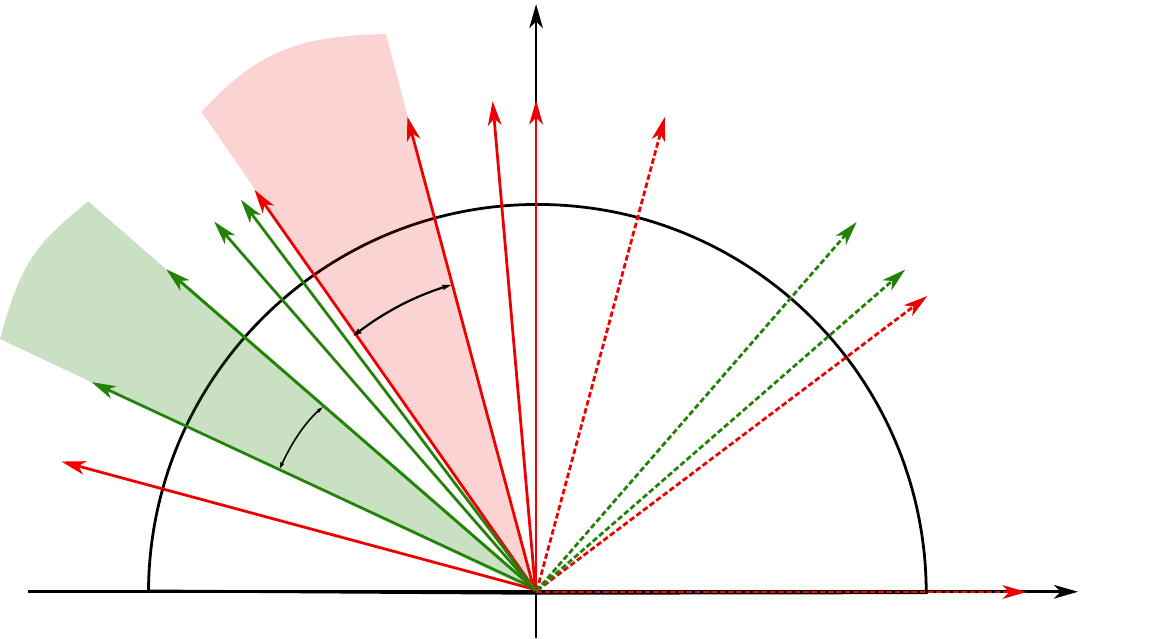}}%
    \put(0.32232973,0.46602321){\color[rgb]{0,0,0}\makebox(0,0)[lb]{\smash{$u_{\tau+1} ^\perp$}}}%
    \put(0.48083011,0.46450285){\color[rgb]{0,0,0}\makebox(0,0)[lb]{\smash{$v_\infty ^\perp$}}}%
    % \put(0,0){\includegraphics[width=\unitlength,page=2]{lac.pdf}}%
    \put(0.31526219,0.29864822){\color[rgb]{0,0,0}\makebox(0,0)[lb]{\smash{$\tilde C_\tau$}}}%
    \put(0.80740207,0.29392689){\color[rgb]{0,0,0}\makebox(0,0)[lb]{\smash{$u_\tau$}}}%
    \put(0.73404465,0.37257942){\color[rgb]{0,0,0}\makebox(0,0)[lb]{\smash{$v_{j-1,\tau}$}}}%
    \put(0.58501944,0.44437262){\color[rgb]{0,0,0}\makebox(0,0)[lb]{\smash{$u_{\tau-1}$}}}%
    \put(0.77841916,0.32634656){\color[rgb]{0,0,0}\makebox(0,0)[lb]{\smash{$v_{j,\tau}$}}}%
    \put(0.21329157,0.40460548){\color[rgb]{0,0,0}\makebox(0,0)[lb]{\smash{$u_{\tau} ^\perp$}}}%
    \put(0.08268519,0.33665979){\color[rgb]{0,0,0}\makebox(0,0)[lb]{\smash{$v_{j-1,\tau} ^\perp$}}}%
    \put(0.14351262,0.37694098){\color[rgb]{0,0,0}\makebox(0,0)[lb]{\smash{$v_{j,\tau} ^\perp$}}}%
    \put(0.47519509,0.5265981){\color[rgb]{0,0,0}\makebox(0,0)[lb]{\smash{$\xi_2$}}}%
    \put(0.9139081,0.05673505){\color[rgb]{0,0,0}\makebox(0,0)[lb]{\smash{$\xi_1$}}}%
    \put(0.84632564,0.01444241){\color[rgb]{0,0,0}\makebox(0,0)[lb]{\smash{$v_\infty$}}}%
    \put(-0.00032423,0.16946439){\color[rgb]{0,0,0}\makebox(0,0)[lb]{\smash{$u_{\tau-1} ^\perp$}}}%
    \put(0.01454192,0.24020618){\color[rgb]{0,0,0}\makebox(0,0)[lb]{\smash{$v_{j-2,\tau} ^\perp$}}}%
    \put(0.1845854,0.19468372){\color[rgb]{0,0,0}\makebox(0,0)[lb]{\smash{$C_{j-2,\tau}$}}}%
  \end{picture}%
\endgroup%
\caption[Lacunary sets and cones]{\small Lacunary sets and cones}\label{f.lac}
\end{figure}
%%%%%%%%%%%%%%%%%%%%%%%%%%%%%% FIGURE FIGURE FIGURE

Given a $D$-lacunary set $\v$ which is successor to $'\v=\{v_j\}_j$ we will write $R_{j}$ for the Fourier restrictions $\widehat{R_{j} f} \coloneqq \widehat f \cic{1}_{C_{j}}$. The Fourier projections $R_{j,\tau}$ and $\tilde R_\tau$ are defined similarly by means of the cones $C_{j,\tau}$, and $\tilde C_{\tau}$, respectively. We have actually proved the following representation for the operator $H_\v ^+$.

%%%%%%%%%%%%%%%%%%%%%%%%%%%%%% LEMMA LEMMA LEMMA
\begin{lemma}\label{l.H^+rep}Let $\v=\{v_j\}_{j\geq 1}$ be a $D$-lacunary set of directions following the assumptions of this paragraph, and assume that $\v$ is successor to $'\v=\{u_\tau\}_{\tau\geq 1}$ and write $\v=\cup_{\tau\geq 1}\v_\tau$ as in \eqref{eq:primeI}.

Then for all $g\in \mathcal S(\R^2)$ such that $\mathrm{supp}\, \widehat g\subseteq \{\xi\in\R^2:\, \xi_1<0,\, \xi_2>0\},$ we have
\[
	H_{\v} ^+ g (x) = \sup_{J,T\geq 1} \Big|\sum_{j\geq J} R_{j,T} g(x)+\sum_{\tau\geq T+1} \tilde R_{\tau} g(x)\Big|.
\]
Furthermore, for $g$ as above we have the recurrence estimate
\[
H_{\v} ^+ g (x)\leq  H^+ _{'\v} (g )+\sup_{\tau\geq 1}H^+ _{\v_\tau}(\tilde R_{\tau-1}g).
\]
\end{lemma}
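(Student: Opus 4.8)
The plan is to decompose the frequency support of $g$ according to the cones attached to $\v$, and then to recognize the supremum defining $H^+_\v$ as a supremum of partial sums of these cone pieces. First I would recall the multiplier description of $H^+_v$: since $H^+_v g(x) = \int \widehat g(\xi) \cic 1_{[0,\infty)}(\xi\cdot v)\e^{ix\cdot\xi}\,\d\xi$, applying this to $g$ with $\mathrm{supp}\,\widehat g\subseteq\{\xi_1<0,\xi_2>0\}$ localizes the relevant frequency region to the arc $\{\xi:\xi_1<0,\xi_2>0,\xi\cdot v_1\geq 0\}$ after discarding the contribution of frequencies with $\xi\cdot v_1<0$, which is absorbed harmlessly (or handled by the clockwise ordering convention of \S\ref{s.conv}). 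The key geometric observation, already recorded in \S\ref{s.cones}, is the disjoint partition
\[
\{\xi\in\R^2:\xi_1<0,\xi_2>0,\xi\cdot v_1\geq 0\}=\bigcup_{\tau\geq1}\bigcup_{j\geq1}C_{j,\tau}=\bigcup_{\tau\geq1}\tilde C_{\tau-1}.
\]
Because the $v_{j,\tau}$ are ordered clockwise with $\theta_{v_{j,\tau}}$ increasing towards $\theta_{u_\tau}$, for a fixed frequency $\xi$ in this region the half-plane condition $\xi\cdot v_{j,\tau}\geq 0$ holds precisely for $j$ below a threshold; hence the Fourier multiplier $\cic 1_{[0,\infty)}(\xi\cdot v_{j,\tau})$ restricted to $\tilde C_\tau$ equals $\sum_{j'\geq j}\cic 1_{C_{j',\tau}}(\xi)$ plus a tail term that lives in the coarser cones $\tilde C_{\tau'}$ with $\tau'\geq\tau$. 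Assembling these identities over the two indices gives, for each $v=v_{J,T}\in\v$, the pointwise identity $H^+_v g(x)=\pm\big(\sum_{j\geq J}R_{j,T}g(x)+\sum_{\tau\geq T+1}\tilde R_\tau g(x)\big)$; taking absolute values and the supremum over $v\in\v$ (equivalently over $J,T\geq1$) yields the first displayed formula.

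For the recurrence estimate I would argue more softly. Write $\v=\bigcup_{\tau\geq1}\v_\tau$ and split $g=\sum_\tau \tilde R_{\tau-1}g$ according to the coarse cones $\tilde C_{\tau-1}$ of $'\v$ (together with a leftover piece supported away from $\bigcup_\tau\tilde C_{\tau-1}$, which contributes nothing to $H^+_v$ for $v\in\v$). For a direction $v\in\v_\tau\subseteq{'I_\tau}$, the multiplier $\cic1_{[0,\infty)}(\xi\cdot v)$ acting on $g$ can be split as follows: on the frequency piece $\tilde R_{\tau-1}g$ it coincides with $H^+_{v,\text{restricted to }\v_\tau}$ of $\tilde R_{\tau-1}g$ by the $1$-lacunary structure of $\v_\tau$ with limit $u_\tau$, so this part is bounded by $\sup_{\tau}H^+_{\v_\tau}(\tilde R_{\tau-1}g)$; on the remaining pieces $\tilde R_{\tau'-1}g$ with $\tau'\neq\tau$, the half-plane defined by $v$ contains or misses the whole coarse cone $\tilde C_{\tau'-1}$ except possibly for the one adjacent to $u_{\tau-1}$ or $u_\tau$, and the union of the fully-contained coarse cones is exactly a half-plane of the form $\{\xi\cdot u_\sigma\geq0\}\cap\{\xi_1<0,\xi_2>0\}$ for some $\sigma$, i.e.\ a partial sum realized by $H^+_{'\v}g$. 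Bounding the two contributions separately and adding gives $H^+_\v g(x)\leq H^+_{'\v}g(x)+\sup_{\tau\geq1}H^+_{\v_\tau}(\tilde R_{\tau-1}g)$.

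The main obstacle is bookkeeping the boundary cones. When $v\in\v_\tau$, the directions $u_{\tau-1}$ and $u_\tau$ are the two endpoints of the arc ${'I_\tau}$, and the half-plane $\{\xi\cdot v\geq0\}$ cuts through the two coarse cones $\tilde C_{\tau-2}$ and $\tilde C_\tau$ adjacent to those endpoints in a way that is not simply ``all or nothing''; one must check that $v_{1,\tau}=u_{\tau-1}$ and the convention $\lim_{j}v_{j,\tau}=u_\tau$ make the cutting compatible with the coarse partition, so that the part of the cut inside $\tilde C_{\tau-2}$ is absorbed into the $H^+_{'\v}$ term (via the direction $u_{\tau-1}\in{'\v}$) and the part inside $\tilde C_\tau$ is absorbed into the $H^+_{\v_\tau}$ term acting on $\tilde R_{\tau-1}g$. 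Once the indices are lined up correctly, each step is the elementary observation that a characteristic function of a half-plane restricted to a union of disjoint sectors is a sum of characteristic functions of the sectors it contains, plus at most one fractional sector at each end; I would organize this via Figure~\ref{f.lac} to keep the geometry transparent.
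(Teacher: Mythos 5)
Your argument is correct and takes essentially the same route as the paper: the exact cone representation $H^+_{v_{J,T}}g=\sum_{j\geq J}R_{j,T}g+\sum_{\tau}\tilde R_\tau g$ (coarse tail), followed by identifying the coarse tail with $H^+_{u_T}g\leq H^+_{'\v}g$ and the fine part with $\sum_{j\geq J}R_{j,T}(\tilde R_{T-1}g)\leq H^+_{\v_T}(\tilde R_{T-1}g)$. The boundary bookkeeping you flag as the main obstacle does not in fact arise: for $v\in\v_\tau$ the cutting line $v^\perp$ lies inside the single coarse cone $\tilde C_{\tau-1}$, so every other coarse cone is either wholly contained in or disjoint from the half-plane $\{\xi\cdot v\geq 0\}$, and the representation holds as an exact identity without the $\pm$ you allow.
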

%%%%%%%%%%%%%%%%%%%%%%%%%%%%%% LEMMA LEMMA LEMMA

%%%%%%%%%%%%%%%%%%%%%%%%%%%%%% PROOF PROOF PROOF
\begin{proof} The first identity of the lemma follows immediately from the the discussion before its statement. To see the second estimate of the lemma, fix some $v\in\v$ and suppose that $v=v_{J,T}$ for some $J,T\geq 1$, where $v_{J,T}$ is defined in the discussion before the statement of the lemma. Using the representation formula of the lemma we easily obtain
\[
\begin{split}
H^+ _{v}(g) &= \sum_{j \geq J} R_{\ell,T} g +\sum_{\tau\geq T+1}\tilde R_\tau g=\sum_{j \geq J}R_{T,j} (\tilde R_{T-1}g)+\sum_{\tau\geq T+1 } \tilde R_\tau(g)
\\
&= \sum_{j\geq J }R_{j,T}(\tilde R_{T-1}g)+H_{u_T} ^+ g.
\\
 \end{split}
\]
Thus we have the estimate
\[
H^+ _\v (g ) \leq \sup_{T\geq 1} |H_{u_T} ^+ g|+ \sup_{\substack{T\geq 1\\J\geq 1 }} \Big|\sum_{j\geq J }R_{j,T}(\tilde R_{T-1}g) \Big|\leq H^+ _{'\v} (g )+\sup_{\tau\geq 1}H^+ _{\v_\tau}(\tilde R_{\tau-1}g)
\]
as desired.
\end{proof}
%%%%%%%%%%%%%%%%%%%%%%%%%%%%%% PROOF PROOF PROOF

%%%%%%%%%%%%%%%%%%%%%%%%%%%%%% SECTION SECTION SECTION
\subsection{Maximal functions and cone frequency projections in $D$-lacunary directions} 
We need to recall two interconnected results, both essentially due to Sj\"ogren and Sjolin \cite[Corollary 2.4]{SS}.  Firstly, as mentioned in the introduction, for each nonnegative integer $D$ and each $1<p\leq \infty$ the directional maximal function $\M_\v$ satisfies the bound 
\begin{equation} \label{e.direcmax}
 \|\M_\v\|_{L^p\to L^p} \leq \tilde \kappa _p(D),
\end{equation}
uniformly over all $D$-lacunary sets of directions $\v$. 

Consider now a $D$-lacunary set of directions $\v=\{v_j\}_j\subset S^1$ and let $\{R_j\}$ be the frequency projections corresponding to the cones $C_j$, as in \S\ref{s.cones}. Then for all $1<p<\infty$ and uniformly over all $D$-lacunary $\v\subset S^1$ we have
\begin{equation} \label{e.SS}
\sup_{\eps_j \in\{-1,0,1\}} \bigg\|\sum_{j} \eps_j R_j f \bigg\|_{L^p(\R^2)}\leq  \kappa_p(D) \|f\|_{L^p(\R^2)}. 
\end{equation}
We stress here that $\kappa_p(D)$ only depends on $p$ and the order of lacunarity $D$. The estimate above is contained within \cite[Corollary 2.4]{SS}; see also\cite[Lemma 3.3]{DDP} for the case $D=1$.
%%%%%%%%%%%%%%%%%%%%%%%%%%%%%% SECTION SECTION SECTION
\subsection{Maximal Fourier multipliers and square functions} The strategy for proving our main results is to first reduce the norm inequality for $H_\v f$ to an appropriate square function estimate that will allow us to independently handle Littlewood-Paley pieces of the function $f$. 

In order to formulate the main square function estimate we first introduce the standard Littlewood-Paley projections. To that end, let $\phi$ be a Schwartz function supported on $[1/2,2]$ and such that
\[
\sum_{k \in \mathbb Z} \phi(2^{-k}\xi ) =1, \qquad \forall \xi \in \R\backslash\{0\}.
\]
For a function $f\in C^\infty_0(\R^2)$ set $\widehat{S_k f}(\xi)\coloneqq\widehat f(\xi ) \phi(2^{-k}|\xi |).$  The following proposition is a  consequence of the Chang-Wilson-Wolff inequality \cite{CWW}. The proof essentially follows from \cite[Lemma 3.1]{GHS}; see also \cite[Proposition 3.1]{DDP}. In the formulation that follows each operator $P_j$ is a Fourier multiplier operator, $\widehat{P_j f}(\xi)\coloneqq m_j(\xi)\widehat f(\xi)$.
%%%%%%%%%%%%%%%%%%%%%%%%%%%%%% PROPOSITION PROPOSITION PROPOSITION
\begin{proposition}\label{p.D-lacunary:CWW}
Let $\{P_j\}_{j=1} ^N$ be Fourier multiplier operators which for all $j$ satisfy  $|\partial_\xi ^\alpha m_j(\xi)|\leq C_\alpha |\xi|^{-|\alpha|}$ for all multi-indices $\alpha$, with $C_\alpha$ independent of $j$. Then for all $1<p<\infty$ there exists $C_p>0$, depending only on $p$, such that
\[
\Big\|\sup_{j=1,\ldots,N} |P_j f|\Big\|_{L^p(\R^2)} \leq C_p (\log N)^{\frac12} \bigg\|  \bigg( \sum_{k \in \mathbb Z}  \Big(\sup_{j=1,\ldots,N} |P_j(S_k f)|\Big)^2 \bigg)^{\frac{1}{2}} \bigg\|_{L^p(\R^2)}.
\]
\end{proposition}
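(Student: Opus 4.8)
The plan is to deduce the estimate from the Chang--Wilson--Wolff inequality \cite{CWW} by a Calder\'on--Zygmund stopping-time argument, following \cite[Lemma 3.1]{GHS} and its specialization \cite[Proposition 3.1]{DDP}. First I would record the structural reduction. Since the $P_j$ and the $S_k$ are all Fourier multipliers they commute, so $P_jf=\sum_{k\in\mathbb Z}P_jS_kf=\sum_k S_k(P_jf)$, and as $S_k$ localizes frequency to $\{|\xi|\simeq 2^k\}$ the family $\{P_jS_kf\}_{k\in\mathbb Z}$ is, for each fixed $j$, a smooth Littlewood--Paley decomposition of $P_jf$. Writing
\[
\mathcal S f\coloneqq\Big(\sum_{k\in\mathbb Z}\big(\sup_{1\le j\le N}|P_jS_kf|\big)^2\Big)^{\frac12},
\]
the key point is that $\mathcal Sf$ dominates pointwise, and \emph{simultaneously for every} $j$, the standard smooth square function $S(P_jf)=\big(\sum_k|S_k(P_jf)|^2\big)^{1/2}$, since $\sup_i|P_iS_kf|\ge|P_jS_kf|=|S_k(P_jf)|$ term by term. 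I would also use the hypothesis $|\partial_\xi^\alpha m_j(\xi)|\le C_\alpha|\xi|^{-|\alpha|}$, uniform in $j$: by Mikhlin's theorem it gives $\|P_j\|_{L^p\to L^p}\lesssim_p 1$ uniformly, and it guarantees that $P_jS_k$ has a kernel which is an $L^1$-normalized bump adapted to scale $2^{-k}$ with uniformly controlled Schwartz tails.

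Next I would invoke the localized good-$\lambda$ form of the Chang--Wilson--Wolff inequality: for a function $g$ with smooth Littlewood--Paley pieces and for a dyadic cube $Q$, the subset of $Q$ on which $g$ oscillates by more than $t\mu$ about its $Q$-average while the $Q$-localized square function of $g$ is at most $\mu$ has measure $\lesssim e^{-ct^2}|Q|$ --- the sub-Gaussian tail being the content of \cite{CWW}. I would then run a Calder\'on--Zygmund stopping-time decomposition on the single function $\mathcal Sf$ at height $\mu$, producing maximal dyadic cubes $Q$ on which $\mathcal Sf$, hence (by the domination above) $S(P_jf)$ for \emph{every} $j$ at once, is controlled by $C\mu$ in the relevant localized sense. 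Applying the CWW estimate to $g=P_jf$ on each stopping cube and taking the union bound over the $N$ indices $j$ gives
\[
\Big|\big\{x\in Q:\ \sup_{1\le j\le N}\big|P_jf(x)-\langle P_jf\rangle_Q\big|>t\mu\big\}\Big|\ \lesssim\ N\,e^{-ct^2}\,|Q|,
\]
and choosing $t\simeq A(\log N)^{1/2}$ with $A$ a large absolute constant makes the right-hand side at most $\tfrac12|Q|$. Summing over the stopping cubes produces a good-$\lambda$ inequality comparing $\sup_j|P_jf|$ with $(\log N)^{1/2}\mathcal Sf$, which integrated against $p\mu^{p-1}\,\d\mu$ in the usual way yields $\big\|\sup_j|P_jf|\big\|_p\lesssim_p(\log N)^{1/2}\|\mathcal Sf\|_p$.

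The step I expect to demand the most care is this passage from the oscillation bound supplied by CWW to a genuine $L^p$ estimate: CWW only controls each $P_jf$ modulo its $Q$-average, so one must fold the localizing constants $\langle P_jf\rangle_Q$ into the stopping scheme --- concretely, on each cube replacing $P_jf$ by $P_jf$ minus its frequency component below the reciprocal of the side length of $Q$, and using the uniform Mikhlin bounds to dominate the error by a maximal function of $\mathcal Sf$ (or of $f$); this is precisely the bookkeeping of \cite[Lemma 3.1]{GHS}. By contrast, the reason the overall loss is only $(\log N)^{1/2}$ is essentially free here: the stopping time is run on the \emph{single} function $\mathcal Sf$, which dominates $S(P_jf)$ for all $j$ simultaneously, so the exceptional set that CWW produces at height $t\mu$ is common to all $j$, and the union over $j$ costs only a factor $N$, beaten by $e^{-ct^2}$ as soon as $t\gtrsim(\log N)^{1/2}$. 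A final routine matter is that all the series involved converge for $f\in C_0^\infty(\R^2)$, which is the only class in which the proposition is used.
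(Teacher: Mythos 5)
Your route is the one the paper itself takes: Proposition \ref{p.D-lacunary:CWW} is stated there as a consequence of the Chang--Wilson--Wolff inequality with the proof deferred to \cite[Lemma 3.1]{GHS} and \cite[Proposition 3.1]{DDP}, and your skeleton --- a good-$\lambda$/stopping-time argument run on the single dominating square function $\mathcal S f$, a union bound over the $N$ indices, and the choice $t\simeq (\log N)^{1/2}$ so that $N\e^{-ct^2}$ is small --- is exactly the mechanism by which those references produce the $(\log N)^{1/2}$ loss.

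The one step you assume rather than prove is the ``localized good-$\lambda$ form of the Chang--Wilson--Wolff inequality for a function with smooth Littlewood--Paley pieces.'' The inequality in \cite{CWW} concerns the dyadic \emph{martingale} square function (with a variant for the Lusin area integral); it is not stated for $\big(\sum_k |S_k g|^2\big)^{1/2}$, and the pointwise domination $S(P_jf)\le \mathcal S f$ on which your stopping scheme rests involves only the smooth square function. Passing from the martingale (conditional) square function of $P_jf$, which is what the sub-Gaussian estimate actually controls, to the quantity $\mathcal S f$ appearing in the statement --- by expanding $P_jf=\sum_k P_jS_kf$, comparing martingale differences with the frequency-localized pieces, and absorbing the resulting error terms into maximal functions --- is the main technical content of \cite[Lemma 3.1]{GHS}. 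It is a different issue from folding in the cube averages $\langle P_jf\rangle_Q$, which is where you locate the delicate bookkeeping. So the outline is correct and matches the paper's intended proof, but as written the key exponential-square estimate for the smooth square function is attributed to \cite{CWW}, where it does not appear; to close the argument you need either the martingale-to-smooth comparison carried out in \cite{GHS}, or a smooth/intrinsic square function version of the exponential-square inequality, and this is precisely the part of the proof your sketch leaves open.
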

%%%%%%%%%%%%%%%%%%%%%%%%%%%%%% PROPOSITION PROPOSITION PROPOSITION

%%%%%%%%%%%%%%%%%%%%%%%%%%%%%% SECTION SECTION SECTION
\subsection{A single annulus pointwise estimate}
The next lemma  relates the maximal operator associated with a 1-lacunary set, when localized to a frequency annulus,  to the maximal function in the limiting direction. The proof is a simpler version of the argument used for the proof of  \cite[Lemma 3.2]{DDP}.

%%%%%%%%%%%%%%%%%%%%%%%%%%%%%% LEMMA LEMMA LEMMA
\begin{lemma} \label{l.lemmamf}Assume that $ \{v_{j}\}_{j\geq 1}$ is a $1$-lacunary set with limit $v$, following the conventions of \S\ref{s.conv}. Let $f\in\mathcal S(\R^2)$ with $\mathrm{supp}\,\widehat f \subseteq \{\xi\in \R^2:\, \xi\cdot v_1\geq 0,\, \xi\cdot v_\infty < 0\}$. With $H_v ^+$ defined as in \eqref{e.Hilb+} we have for every $k\in\mathbb Z$
\[
\sup_{j} |H^+ _{v_j}(S_k f)(x)| \lesssim \M_{v } [(S_k f)_{\mathrm{odd}}](x) +   \M_{v }[(S_k f)_{\mathrm{ev}}](x)
\]
where $ g_{ \mathrm{odd}}\coloneqq \sum_{j \, \mathrm{odd}} R_j g$ and similarly for $g_{ \mathrm{ev}}$.
\end{lemma}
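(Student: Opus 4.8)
The plan is to exploit the one-dimensional structure of $H^+_{v_j}$ at a fixed frequency scale. For a Schwartz function $f$ with $\widehat f$ supported in the cone $\{\xi\cdot v_1\geq 0,\ \xi\cdot v_\infty<0\}$, the operator $H^+_{v_j}$ is precisely the Fourier projection onto the sub-cone $\{\xi\cdot v_j\geq 0\}$ intersected with the support of $\widehat f$; by the conventions of \S\ref{s.cones}, for $g$ frequency-supported as above we have $H^+_{v_j}g = \sum_{\ell\geq j} R_\ell g$. Applying this to $g=S_kf$ and splitting into odd and even indices, it suffices to prove the pointwise bound $\sup_j |\sum_{\ell\geq j,\ \ell\ \mathrm{odd}} R_\ell(S_kf)(x)| \lesssim \M_v[(S_kf)_{\mathrm{odd}}](x)$, and likewise for the even part. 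So after relabeling I may assume $g=(S_kf)_{\mathrm{odd}}$ is a single-annulus function supported (in frequency) on the union of every other cone $C_\ell$, and I must bound $\sup_j|\sum_{\ell\geq j}R_\ell g|$ by $\M_v g$.

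The key point is that the partial sum $\sum_{\ell\geq j} R_\ell g$ is, up to harmless error, a smooth Fourier restriction of $g$ to a single cone whose boundary is the ray perpendicular to $v_j$ and the ray perpendicular to $v_\infty$. Because $g$ lives on a single dyadic annulus $|\xi|\sim 2^k$ and the cones have been thinned out (only every other one survives, so consecutive surviving cones are separated by a full intermediate cone), one can realize $\sum_{\ell\geq j}R_\ell g$ as convolution of $g$ against a kernel of the form $2^{2k}\Psi_j(2^k\cdot)$ where $\Psi_j$ is an $L^1$-normalized bump adapted to a rectangle of dimensions roughly $2^{-k}\times 2^{-k}\mathrm{dist}(v_j,v_\infty)^{-1}$ oriented along $v$ — this is exactly the geometry of the cone of aperture $\sim\mathrm{dist}(v_j,v_\infty)$ around the direction $v^\perp$. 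The separation afforded by dropping every other cone is what makes the multiplier $\cic1_{\cup_{\ell\geq j}C_\ell}\,\phi(2^{-k}|\xi|)$ smooth on the relevant scale, so that $\Psi_j$ genuinely satisfies the bounds of a smooth truncated bump; I would make this precise by writing the multiplier as a difference of two smooth cone cutoffs and transferring the one-dimensional angular smoothing into the two-dimensional kernel estimate. Dominating convolution against $|\Psi_j|$ pointwise by the average of $|g|$ over the dual rectangle, and noting that this rectangle is contained in the interval of half-length $2^{-k}\mathrm{dist}(v_j,v_\infty)^{-1}$ in the direction $v$ centered at $x$, gives $|\sum_{\ell\geq j}R_\ell g(x)|\lesssim \M_v g(x)$ uniformly in $j$; taking the supremum over $j$ preserves the bound since the right-hand side is already $j$-independent after the maximal function swallows the rectangle.

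The main obstacle I anticipate is the kernel estimate: verifying that the Fourier restriction of a single-annulus function to the union of a lacunary family of cones $\{C_\ell\}_{\ell\geq j}$ (thinned to alternate cones) is controlled pointwise by $\M_v$ with a constant independent of $j$ and of the cardinality of the cone family. The delicate part is that the inner boundary ray (perpendicular to $v_j$) can be at any angle, so the adapted rectangle changes shape with $j$; one must check that the constants in the bump-function bounds for $\Psi_j$ do not degenerate as $\mathrm{dist}(v_j,v_\infty)\to 0$. This is where the thinning to alternate cones is essential: it guarantees that between the surviving cone with boundary near $v_j$ and the next one there is a buffer of comparable angular width, so the cutoff can be smoothed at the scale of its own aperture, and standard stationary-phase/integration-by-parts estimates for the kernel go through uniformly. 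This is precisely the ``simpler version'' of the argument from \cite[Lemma 3.2]{DDP} alluded to before the statement, the simplification being that here we need only the single-annulus estimate rather than the full square-function control.
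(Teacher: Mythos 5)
Your reductions are sound and follow the intended route: for $\widehat f$ supported in $\{\xi\cdot v_1\geq 0,\ \xi\cdot v_\infty<0\}$ one indeed has $H^+_{v_j}S_k f=\sum_{\ell\geq j}R_\ell S_k f$, the odd/even splitting reduces the claim to bounding $\sup_j\big|\sum_{\ell\geq j,\ \ell\,\mathrm{odd}}R_\ell g\big|$ for $g=(S_kf)_{\mathrm{odd}}$, and the empty buffer cone $C_{j-1}$, whose angular width is $\gtrsim\theta_j$ by lacunarity, is exactly what allows the rough cut along $v_j^\perp$ to be smoothed. The gap is in your final domination step. By incorporating the radial cutoff $\phi(2^{-k}|\xi|)$ into the multiplier you produce a genuinely two-dimensional kernel, an $L^1$-normalized bump adapted to a $2^{-k}\times 2^{-k}\mathrm{dist}(v_j,v_\infty)^{-1}$ rectangle with sides along $v^\perp$ and $v$, and averages of $|g|$ over such rectangles are \emph{not} pointwise controlled by the one-dimensional maximal function $\M_v g(x)$: a planar rectangle is not ``contained in an interval in the direction $v$'', and $|R|^{-1}\int_R|g|$ can be strictly positive while $g$ vanishes identically on the line $x+\R v$. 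Frequency localization to the annulus does not rescue this; for instance $g(y)=\chi(y_1)\,\rho(y_2)\,\e^{2\pi i 2^k y_2}$ with $\chi,\rho$ band-limited at scale $2^k$ and $\rho(x_2)=0$ gives $\M_v g(x)=0$ (taking $v=(1,0)$) while the rectangle average at $x$ is positive. What your argument actually yields is a bound by a maximal average over rectangles oriented along $v,v^\perp$ (essentially $\M_v\M_{v^\perp}$), which is not the statement of the lemma.

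The repair, and this is the content of the ``simpler version of \cite[Lemma 3.2]{DDP}'' alluded to in the text, is to smooth the cutoff as a function of the \emph{single} variable $\xi\cdot v$ and to include no radial factor at all. On the annulus $|\xi|\simeq 2^k$, the cones $C_\ell$ with $\ell\geq j$ lie in $\{|\xi\cdot v|\lesssim 2^k\theta_j\}$, while the cones $C_\ell$ with $\ell\leq j-2$ odd, which carry all the remaining frequency support of $(S_kf)_{\mathrm{odd}}$, lie in $\{|\xi\cdot v|\gtrsim 2^k\theta_{j-1}\}$; since the lacunarity constant may be taken small by the conventions of \S\ref{s.conv}, there is a genuine gap, and one may choose $m_{j,k}(\xi)=\zeta\bigl(\xi\cdot v/(2^k\theta_j)\bigr)$ with $\zeta$ a fixed smooth profile so that $m_{j,k}$ coincides with $\cic{1}_{\{\xi\cdot v_j\geq 0\}}$ on the support of $\widehat{(S_kf)_{\mathrm{odd}}}$, with the transition occurring inside the empty buffer. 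The resulting operator is a one-dimensional convolution along lines parallel to $v$ against a kernel majorized by an $L^1$-normalized even decreasing bump at scale $(2^k\theta_j)^{-1}$, uniformly in $j$ and $k$, and is therefore pointwise dominated by $\M_v[(S_kf)_{\mathrm{odd}}](x)$; this is precisely the inequality claimed, with $S_k$ correctly appearing inside the maximal function. With that substitution your argument becomes correct.
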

%%%%%%%%%%%%%%%%%%%%%%%%%%%%%% SECTION SECTION SECTION
\subsection{A Fefferman-Stein inequality for directional maximal functions} Let $\v=\{v_j\}_{j\geq 1}$ be a $D$-lacunary set of directions for some nonnegative integer $D$. The following Fefferman-Stein type inequality will play an important role in deducing the square function estimate, which is the main estimate for the proof of Theorem~\ref{t.D-lacunary:main}. We include a proof for completeness although we do not claim any originality on this result.

%%%%%%%%%%%%%%%%%%%%%%%%%%%%%% LEMMA LEMMA LEMMA
\begin{lemma}\label{l.FS} Let $\v=\{v_j\}_{j\geq 1}$ be a $D$-lacunary set of directions. For all $1<p,q<\infty$ we have
\[
\| \{\M_{v_j  }  h_j \}  \|_{L^p(\R^2;\, \ell^q )} \leq K_{q,p}(D)     \| \{  h_j \}  \|_{L^p(\R^2;\, \ell^q )} 
\]
where the constant $K_{q,p}(D)$ depends only on $q,p,D$.
\end{lemma}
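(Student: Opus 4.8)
The plan is to prove the vector-valued (Fefferman--Stein) inequality
\[
\Big\| \big( \textstyle\sum_j (\M_{v_j} h_j)^q \big)^{1/q} \Big\|_{L^p} \leq K_{q,p}(D) \Big\| \big( \textstyle\sum_j |h_j|^q \big)^{1/q} \Big\|_{L^p}
\]
by a bootstrapping argument that reduces everything to the scalar maximal estimate \eqref{e.direcmax}, together with the classical (isotropic) Fefferman--Stein vector-valued inequality for the Hardy--Littlewood maximal operator. The key observation is that each $\M_{v_j}$ is a directional maximal operator, hence pointwise dominated by the full strong maximal function (or even by $M$ composed appropriately along coordinate rotations), but that crude bound loses the $D$-dependence in the wrong way; instead I want to keep the directional structure and interpolate.

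First I would establish the two endpoint-type estimates. The easy one is $q=p$: by Fubini/Tonelli and the scalar bound \eqref{e.direcmax}, $\|\{\M_{v_j}h_j\}\|_{L^p(\ell^p)}^p = \sum_j \|\M_{v_j}h_j\|_{L^p}^p \leq \tilde\kappa_p(D)^p \sum_j\|h_j\|_{L^p}^p = \tilde\kappa_p(D)^p\|\{h_j\}\|_{L^p(\ell^p)}^p$, so the case $p=q$ is immediate with $K_{p,p}(D)=\tilde\kappa_p(D)$. The genuinely vector-valued content is needed when $q\neq p$. For this I would run the standard linearization/duality scheme behind the Fefferman--Stein inequality: linearize each $\M_{v_j}$ by choosing measurable radii $\eps_j(x)$ so that $\M_{v_j}h_j(x)\approx A_j h_j(x):= \frac{1}{2\eps_j(x)}\int_{|t|\le\eps_j(x)}|h_j(x+tv_j)|\,dt$, and then bound $\|\{A_jh_j\}\|_{L^p(\ell^q)}$. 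When $q\geq p$ one uses $L^{p/q}$-duality: there is a nonnegative $w\in L^{(p/q)'}$, $\|w\|=1$, with $\|\{A_jh_j\}\|_{L^p(\ell^q)}^q = \int \sum_j (A_jh_j)^q\, w$. Since $A_j$ is a positive averaging operator, one has the pointwise bound $(A_jh_j)^q \leq A_j(|h_j|^q)\cdot(\text{something})$ only after an application of Jensen's inequality when $q\geq 1$, giving $(A_j h_j(x))^q \le A_j(|h_j|^q)(x)$ (the average of $|h_j|$ to the $q$ is at most the average of $|h_j|^q$ since $q\ge1$ and $A_j$ is an average). Hence $\int \sum_j (A_jh_j)^q w \le \int \sum_j A_j(|h_j|^q)\, w \le \int \sum_j |h_j|^q \cdot \M_{v_j}^* w$ where $\M_{v_j}^*$ is the adjoint averaging operator, which is again a directional maximal-type operator and is controlled by $\M_{v_j}$, hence by $\M_\v$. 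Then $\le \|\M_\v w\|_{L^{(p/q)'}} \cdot \|\{h_j\}\|_{L^p(\ell^q)}^q$, and \eqref{e.direcmax} applied with exponent $(p/q)'>1$ closes the estimate with $K_{q,p}(D)=\tilde\kappa_{(p/q)'}(D)^{1/q}$. This handles all $1<p\le q<\infty$.

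For the complementary range $1<q<p<\infty$ I would instead dualize in $\ell^{q'}$: write the $L^p(\ell^q)$ norm against a sequence $\{g_j\}\in L^{p'}(\ell^{q'})$, use the self-adjointness structure of the averaging operators to move $\M_{v_j}$ onto $g_j$ (again up to replacing by a comparable directional maximal operator), and then one is left to bound $\int \sum_j |h_j|\,\M_{v_j}g_j$ by Hölder in $j$ and then in $x$, controlling $\|\{\M_{v_j}g_j\}\|_{L^{p'}(\ell^{q'})}$; since $p'<q'$ we are back in the previously treated regime $p'\le q'$, so this is a genuine induction/bootstrap on the pair of exponents rather than a circular argument. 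Alternatively, and perhaps cleaner to write, one can get the full range at once by complex/real interpolation: the map $\{h_j\}\mapsto\{\M_{v_j}h_j\}$ is sublinear and positive, we have the diagonal bound at $q=p$ for every $p\in(1,\infty)$, and a vector-valued bound at, say, $q=2$ obtained from a square-function/Rubio-de-Francia-free argument or directly from the $q\ge p$ case above applied at $p$ slightly below $2$; then Marcinkiewicz-type interpolation for $\ell^q$-valued operators (as in Benedek--Calderón--Panzone, or Grafakos Ch.\ on vector-valued inequalities) fills in all $1<q<\infty$ for fixed $p$. I would present whichever of these is shortest; the duality-plus-bootstrap version keeps the $D$-dependence completely explicit and transparent.

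The main obstacle — really the only nontrivial point — is the step where I claim the adjoint of the linearized directional average $A_j$ is again controlled by the directional maximal function $\M_{v_j}$ (equivalently, that the ``backwards'' averaging operator $w\mapsto \int \frac{\mathbf 1_{|t|\le\eps_j(x-tv_j)}}{2\eps_j(x-tv_j)} w(x-tv_j)\,dt$ is dominated pointwise by $C\,\M_{v_j}w$, or at least by $C\,\M_\v w$ with the same $D$-dependence). This is the familiar subtlety in proving Fefferman--Stein inequalities: the adjoint of a linearized maximal operator is not literally a maximal operator, and one needs the standard covering/stopping-time argument (the ``forward-in-time'' trick, exactly as in the classical proof for $M$) to see it is still bounded by the maximal operator with a dimension-free — here $\lambda,D$-uniform — constant. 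Since $\M_{v_j}$ is a one-dimensional maximal operator acting along a line, this reduces to the one-dimensional Hardy--Littlewood situation and the constant is absolute; the $D$ (and $\lambda$) dependence enters only through the final application of \eqref{e.direcmax} to $w$ (resp.\ to $\{g_j\}$). I would write this lemma carefully and then assemble the pieces; everything else is bookkeeping with Hölder's inequality, Jensen's inequality and Fubini.
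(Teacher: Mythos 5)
There is a genuine gap at precisely the step you single out as the main obstacle, and it cannot be repaired as you describe. After Jensen's inequality your argument reduces to an exponent-one weighted bound, namely $\int \M_{v}(F)\,w \lesssim \int F\,\M_{v}w$, equivalently to the pointwise domination of the adjoint of the linearized average by the maximal operator; both statements are false. Concretely, in one dimension take the linearizing radius $\eps(x)=|x|$ (a legitimate choice, since your bound must be uniform over all measurable $\eps$) and $w=\cic{1}_{[1,R]}$: at the point $0$ the condition $|0-x|\le\eps(x)$ holds for every $x$, so the adjoint gives $\int_1^R \frac{\d x}{2x}\simeq \log R$, while $\M w(0)\le \tfrac12$. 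Likewise, with $F=\cic{1}_{[0,1]}$ and $w=\sum_{k\le N}2^k\cic{1}_{[2^k,2^k+1]}$ one has $\int \M F\,w\simeq N$ but $\int F\,\M w\simeq 1$. This is the classical fact that the weighted Fefferman--Stein inequality holds in strong form only for exponents $q>1$ (at exponent $1$ only the weak type survives), so no covering or stopping-time argument can produce the adjoint bound with a uniform constant; your Jensen step throws away exactly the $q>1$ structure that makes the lemma true. There is also a mismatch of ranges in your duality step: the representation $\|\{\M_{v_j}h_j\}\|_{L^p(\ell^q)}^q=\int\sum_j(\M_{v_j}h_j)^q\,w$ with $\|w\|_{L^{(p/q)'}}=1$ requires $p/q\ge 1$, i.e. $p\ge q$, not $q\ge p$ as written; and your treatment of the complementary range by dualizing in $\ell^{q'}$ again needs to move the (non-self-adjoint) linearized average onto $g_j$, i.e. it relies on the same false adjoint domination.

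The repair is essentially the paper's argument, and your overall strategy of reducing everything to \eqref{e.direcmax} is sound. For $p>q$ keep the weight duality with $u\in L^{(p/q)'}$, but instead of Jensen plus adjoints invoke the one-dimensional weighted Fefferman--Stein inequality $\int|\M_{v_j}f|^q w\lesssim_q\int|f|^q\,\M_{v_j}w$, valid precisely because $q>1$ and with constant independent of the direction $v_j$; then use $\M_{v_j}u\le\M_{\v}u$, H\"older, and \eqref{e.direcmax} with exponent $(p/q)'$. For $p\le q$ avoid duality altogether: after linearizing, interpolate between your diagonal case $q=p$ and the $\ell^\infty$ endpoint $\sup_j\M_{v_j}h_j\le\M_{\v}\big(\sup_j|h_j|\big)$, which is again just \eqref{e.direcmax}. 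Your diagonal computation is correct; it is the exponent-one reduction and the adjoint claim that must be replaced by the $q>1$ weighted inequality (or by the $\ell^\infty$ endpoint plus interpolation).
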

%%%%%%%%%%%%%%%%%%%%%%%%%%%%%% LEMMA LEMMA LEMMA

%%%%%%%%%%%%%%%%%%%%%%%%%%%%%% PROOF PROOF PROOF
\begin{proof} It suffices to prove the same bound for the linearized operators
\[
f\mapsto   \frac{1}{2\eps(x)} \int_{-\eps(x)}^{\eps(x)} |f(x+tu_j)| \,\d t
\]
which we continue to denote by $\M_{v_j}$, uniformly over all positive measurable functions $\eps(x)$. This formulation has the advantage of allowing for complex interpolation. The case $p\leq q$ of the estimate of the lemma follows by complex interpolation of the following estimates for the vector valued operator $\{h_j\}\mapsto \{M_{v_j  } h_j\}$; namely the trivial estimate
 \[
\| \{\M_{v_j  } h_j \} \|_{L^p(\R^2;\, \ell^p)} \leq \|\M_{\v}\|_{L^p\to L^p} \| \{  h_j \} \|_{L^p(\R^2;\, \ell^p)} \lesssim_{p,D}  \| \{  h_j \} \|_{L^p(\R^2;\, \ell^p)},
\]
with the easy estimate
\[
\| \{\M_{v_j} h_j\}  \|_{L^p(\R^2;\, \ell^\infty )} \leq  \| \M_{\v} (\sup_j |h_j|) \|_{L^p(\R^2)}  \lesssim_{p,D} \| \{h_j\} \|_{L^p(\R^2;\;  \ell^\infty )} .
\] 
Observe that for $\theta=1-p/q  $ we have $\theta\in[0,1)$ and $[L^p(\R^2; \ell^p):L^p(\R^2; \ell^\infty)]_\theta=L^p(\R^2; \ell^q)$.

We now prove the case $p> q$. Let $r=(p/q)'> 1$. We start by observing that
\[
\| \{\M_{v_j  }  h_j \}  \|_{L^p(\R^2;\, \ell^q )}  ^q=\bigg( \int_{\R^2} \Big(\sum_{j} |\M_{v_j}h_j(x)|^q\Big)^\frac{p}{q}\,\d x\bigg)^ \frac{q}{p}=\int_{\R^2} \sum_{j} |\M_{v_j}h_j(x)|^q u(x)\,\d x
\]
for some nonnegative $u \in L^r(\R^2)$ with $\|u \|_r=1$. Given a Schwartz function $f$ and a nonnegative function $w$, the Fefferman-Stein inequality
\[
\int_{\R^2} |\M_{v_j} f|^q w \lesssim_q  \int_{\R^2}|f|^q \M_{v_j}w
\]
follows easily by its one-dimensional counterpart along the direction $v_j\in S^1$. Note that the estimates produced that way are independent of $v_j \in S^1$. Thus we can estimate
\[
\begin{split}
\int_{\R^2}\Big( \sum_{j} |\M_{v_j}   h_j(x)|^q\Big) u(x) \, \d x &\lesssim_q \sum_{j} \int_{\R^2} |h_j(x)|^q \M_{v_j} u(x)\, \d x
\leq \int_{\R^2} \sum_j |h_j(x)|^q \M_{\v} u(x)\, \d x
\\
& \leq \Big(\int_{\R^2} \big(\sum_j |h_j(x)|^q\big)^\frac{p}{q}\Big)^\frac{q}{p} \| \M_{\v} u\|_{L^r(\R^2)}
\\
&\leq   \|\{h_j\}\|_{L^p(\R^2;\,\ell ^q)}^q\| \M_{\v} \|_{L^r \to L^r}\|u\|_{L^r(\R^2)}\lesssim_{p,q,D}  \|\{h_j\}\|_{L^p(\R^2;\,\ell ^q)}^q
\end{split}
\]
which gives the desired Fefferman-Stein inequality for $p>q$ by taking $q$-th roots on both sides of the estimate. 
\end{proof}
%%%%%%%%%%%%%%%%%%%%%%%%%%%%%% PROOF PROOF PROOF

%%%%%%%%%%%%%%%%%%%%%%%%%%%%%% SECTION SECTION SECTION
\section{Proof of Theorem~\ref{t.D-lacunary:main}} This section is devoted to the proof of Theorem~\ref{t.D-lacunary:main}. 
Let us immediately observe that the upper bound of the theorem together with Karagulyan's $L^2\to L^2$ lower bound from \cite{Karag} and Riesz-Thorin interpolation immediately implies the lower bound for all $p\in(1,\infty)$. Thus it suffices to prove the upper bound in Theorem \ref{t.D-lacunary:main} for all $1<p<\infty$.  We will actually prove the theorem for the operator $H_\v ^+$ as defined in~\eqref{e.Hilb+} which is a linear combination of the identity and $H_\v$, and thus obeys equivalent norm bounds.

%%%%%%%%%%%%%%%%%%%%%%%%%%%%%% SECTION SECTION SECTION
\subsection{Initial reductions}
Throughout, we fix a $D$-lacunary set of directions $\v=\{v_j\}_{j\geq 1}$ with sufficiently small lacunarity constant which is a successor of a $(D-1)$-lacunary set $'\v=\{u_\tau\}_{\tau\geq 1},$ so that $\v=\cup_{\tau\geq 1}\v_\tau$, according to the conventions and notations of \S\ref{s.conv}. Here we remember that for each $\tau\geq 1$ we have $\v_\tau\coloneqq \v\cap {'I_\tau}=\{v_{j,\tau}\}_{j\geq 1}$, where $\{'I_\tau\}_{\tau\geq 1}$ are the complementary arcs of $'\v$. With these assumptions taken as understood we may reduce to the case where $\widehat f$ is supported on the second and fourth quadrant. Indeed, since we assume that the directions of $\v$ are in the first quadrant, the operators $H_{v_j} ^+$ do not touch functions with frequency support in the first and third quadrants. By symmetry, we can further reduce to the case that $ f$ has frequency support in the second quadrant. 

Applying Proposition~\ref{p.D-lacunary:CWW} to the family $\{H^+_{v}\}_{v\in\v}$, the upper bound in Theorem~\ref{t.D-lacunary:main} reduces to the square function estimate
\begin{equation} \label{D-lacunary:SFE}
 \bigg\|  \Big( \sum_{k \in \mathbb Z} |H_{\v} ^+ (S_k f)|^2 \Big)^{\frac{1}{2}} \bigg\|_{L^p(\R^2)} \leq C_p(D)\|f\|_{L^p(\R^2)}, \qquad1< p < \infty,
\end{equation} 
for any countable set of directions $\v$ which is $D$-lacunary, where the constant $C_p(D)$ depends only on $p,D$. Here we remember that $S_k$ are smooth frequency projections to the annulus $\{|\xi|\simeq 2^k\}$.

%%%%%%%%%%%%%%%%%%%%%%%%%%%%%% SECTION SECTION SECTION
\subsection{Proof of \eqref{D-lacunary:SFE}} The main part of the proof proceeds by induction of \eqref{D-lacunary:SFE} on the order of lacunarity. For $D=1$ the base step of the induction follows from \cite[\S3.2]{DDP}. Now let $D\geq 2$ and assume that \eqref{D-lacunary:SFE} holds uniformly for all $(D-1)$-lacunary sets of directions. The recurrence estimate of Lemma~\ref{l.H^+rep} applied to $g= S_k f$ gives
\[
H_{\v} ^+ S_k f (x)\leq H_{'\v} ^+ S_k f + \sup_{\tau\geq 1} H_{\v_\tau} ^+ (\tilde R_{\tau-1}S_k f).
\]
This together with the inductive hypothesis now imply that
\begin{equation}\label{e.square}
\begin{split}
\Big\|\Big( \sum_{k \in \mathbb Z} |H_{\v} (S_k f)|^2 \Big)^{\frac12} \Big\|_{L^p(\R^2)}  &\lesssim  C_p(D-1 )\|f\|_{L^p(\R^2)}   
\\
&\qquad + \bigg\|\Big( \sum_{k \in \mathbb Z}  \sup_{\tau\geq 1}  | H_{\v_\tau} ^+(\tilde R_{\tau-1} S_k f)|^2 \Big)^{\frac12} \bigg\|_{L^p(\R^2)}.
\end{split}
\end{equation}

We now proceed with the estimate of the second summand in the right hand side above. According to Lemma \ref{l.lemmamf}, we have for each $\tau\geq 1$ that
\begin{equation}\label{Mus}
\begin{split}
H^+ _{\v_\tau} (\tilde R_{\tau-1}  S_k f)  & \lesssim \M_{u_\tau } [(\tilde R_{\tau-1}S_k f)_{ \mathrm{odd}}] +   \M_{u_\tau} [(\tilde R_{\tau-1}S_k f)_{ \mathrm{ev}}] 
\\
&\lesssim  \Big(\sum_{\tau\geq 1}  \big|\M_{u_\tau}(F_{\mathrm{1}} ^{k,\tau})\big|^2\Big)^\frac{1}{2} + \Big(\sum_{\tau\geq 1}  \big|\M_{u_\tau}(F_{\mathrm{2}} ^{k,\tau})\big|^2\Big)^\frac{1}{2},
\end{split}
\end{equation}
where $F_\mathrm{1} ^{k,\tau}\coloneqq (\tilde R_{\tau-1}S_k f)_{ \mathrm{odd}} $ and $F_\mathrm{2} ^{k,\tau}\coloneqq (\tilde R_{\tau-1}S_k f)_{ \mathrm{even}} $. We only treat the first summand in the estimate above, the estimate for the second one being similar. Observe that
\[
F_\mathrm{1} ^{k,\tau}= \sum_{j \, \mathrm{odd}} R_{j,\tau} (\tilde R_{\tau-1}S_k f).
\]
Applying the case $q=2$ of the Fefferman-Stein inequality of Lemma~\ref{l.FS} to the $(D-1)$-lacunary sequence $\{u_\tau\}_{\tau\geq 1}$ yields
\[
\bigg\|\bigg(\sum_{k\in \mathbb Z}\sum_{\tau\geq 1} |\M_{u_\tau  } (F_{\mathrm{1}} ^{k,\tau})|^2\bigg)^{\frac{1}{2}}\bigg\|_{L^p(\R^2)} \leq K_p(D-1)   \bigg\|\bigg(\sum_{k} \sum_{\tau} |F_{\mathrm{1}} ^{k,\tau} |^2\bigg)^{\frac{1}{2}}\bigg\|_{L^p(\R^2)}.
\]
However, the right hand side of the previous estimate can be estimated by the biparameter Khintchine inequality as follows
\[
\begin{split}
	\bigg\|\bigg(\sum_{k} \sum_{\tau} |F_{\mathrm{1}} ^{k,\tau} |^2\bigg)^{\frac{1}{2}}\bigg\|_{L^p(\R^2)} ^p & \lesssim \Exp_\eta \Exp_\omega \int_{\R^2} \Bigg| \sum_{\tau\geq 1} \sum_{{j \, \mathrm{odd}} }\eps_\tau(\eta)   R_{j,\tau} \Big( \sum_{k\in\mathbb Z} \eps_k(\omega)S_k f \Big) \Bigg|^p
	\\
	&\leq \kappa_p(D-1) ^p \Exp_\omega \int_{\R^2}  \Big| \sum_{k\in\mathbb Z} \eps_k(\omega)S_k f \Big|^p\simeq_{p} \kappa_p( D-1)^p \|f\|_{L^p(\R^2)} ^p,
\end{split}
\]
where we used \eqref{e.SS} in going from the first to the second line and the Littlewood-Paley inequality in the last estimate. Arguing similarly for $F_{\mathrm 2} ^{k,\tau}$ we have that
\[
\sum_{\mathrm{par}\in\{1,2\}}\bigg\|\bigg(\sum_{k\in \mathbb Z}\sum_{\tau\geq 1} |\M_{u_\tau  } ( F_{\mathrm{par}} ^{k,\tau})|^2\bigg)^{\frac{1}{2}}\bigg\|_{L^p(\R^2)} \lesssim_{D,p} \|f\|_{L^p(\R^2)}
\]
Using these estimates in \eqref{Mus} and replacing in \eqref{e.square} yields \eqref{D-lacunary:SFE} and thus completes the proof of Theorem~\ref{t.D-lacunary:main}.

%%%%%%%%%%%%%%%%%%%%%%%%%%%%%% SECTION SECTION SECTION
\section{Proof of Theorem~\ref{t.Liplac}} This section is devoted to the proof of Theorem~\ref{t.Liplac}. We begin with some reductions of the problem which follow closely the arguments in \cite[\S3]{GuoThiele}.

%%%%%%%%%%%%%%%%%%%%%%%%%%%%%% SECTION SECTION SECTION
\subsection{Preliminary reductions} We begin by setting, for $d=1,\ldots,D$,
\[
\mathsf{v}_d(x)\coloneqq  \prod_{j=1}^d \e^{2\pi i 2^{\lfloor \log_2 \lambda_j(x)\rfloor}},\qquad \v_{d}\coloneqq \mathsf{v}_{d}(\R^2),
\]
so $\v_{d}$ is the range of $\mathsf{v}_{d}$ for each $d$. Notice that, by splitting $f$ into a finite number of disjointly supported pieces, and rotating each piece by a fixed angle, we can assume that $\lambda_1:\R^2\to(0,2^{-15}]$. Then $\lambda_j\leq 2^{-5(j-1)}2^{-15}$ for each $j\geq 2$ and each $\v_{d}$, $d=1,\ldots,D$, is contained in the first quadrant portion of the cone $C_{\mathrm{nh}}$ bordered by the directions $\{\e^{2\pi i 0}, \e^{2\pi i 2^{-5}}\}$. We later refer to sets of directions $\v$ contained in $C_{\mathrm{nh}}$ as being \emph{nearly horizontal}.

Consider the cone in the second   quadrant
 \[
\Gamma_0\coloneqq \{\xi \in \R^2:\, \xi_2>-2\xi_1,\, \xi_1<-2^6,\, \xi_2>0\}.
 \]
The arguments of \cite[\S 3]{GuoThiele}, which apply to any vector field with nearly horizontal range, yield the pointwise bound
\[
|H_{\mathsf{v}_D ,1} f | \lesssim  \M_{(0,1)}\M_{(1,0)} f + \M_{\v_D} f 
\]  
whenever $  \widehat f $ is supported off $\Gamma_0\cup -\Gamma_0$. As the right hand side of the last display is $L^p$-bounded, and by treating similarly the part supported in the lower half-plane,   we are allowed to assume that $\mathrm{supp}\,\widehat f\subset \Gamma_0$ for the remainder of the proof.  In this case, following again \cite{GuoThiele}, we have 
\[
\|H_{\mathsf{v}_D,1} f \|_p \lesssim  \sup_{|\tau|\leq 2^{4} } \|H_{\mathsf{v}_D}^{+\tau} f \|_p   + \| \M_{\v_D} f\|_p
\]
where we are using the notation, for a generic  direction $v$  and vector field $\mathsf v$,
\[
H_{v}^{+\tau} f(x) =\int_{\R^2} \widehat f(\xi) \cic{1}_{[\tau,\infty)} (\xi \cdot v) \, \e^{2\pi i x\cdot \xi }\d \xi, \qquad 
H_{\mathsf v}^{+\tau} f(x)\coloneqq H_{\mathsf v(x)}^{+\tau} f(x).
\]
We have thus reduced Theorem 2 to the proof of the estimate
\begin{equation}
\label{e.T2:main1}
\|H_{\mathsf{v}_D}^{+\tau} f\|_{p} \leq \beta_p(D) \|  f\|_{p}
\end{equation}
where the constant $\beta_p(D)$ can be chosen uniform over $|\tau|\le 2^4$, whenever $\mathrm{supp}\,\widehat f\subset \Gamma_0$. In order to simplify the notation we will we work in the paradigmatic case $\tau=0$, so we only show \eqref{e.T2:main1} for $H^+$. However, the proof for $H^{+\tau}$ is essentially identical and the details are left to the reader.

%%%%%%%%%%%%%%%%%%%%%%%%%%%%%% REMARK REMARK REMARK
\begin{remark} In what follows we will consider the Hilbert transform  $H_{v} f$ along almost horizontal directions $v$. Now it is not hard to verify that $H_v f(x)$  does not depend on $v$ when $f$ has frequency support in $\{ \xi_2 \leq -2\xi_1 \}$.  Thus, instead of the cone $\Gamma_0$ we could have considered the cone $\Gamma_1\coloneqq \{\xi_1<-2^6,\,\xi_2>0\}$. 
	
A consequence of the definition of $\Gamma_0$ is that for every $\xi\in\Gamma_0$ and every almost horizontal direction $v$ we have that
\begin{equation}\label{almostradial}
 |\xi|\leq 2\xi \cdot v^\perp.
\end{equation}
\end{remark}
%%%%%%%%%%%%%%%%%%%%%%%%%%%%%% REMARK REMARK REMARK

%%%%%%%%%%%%%%%%%%%%%%%%%%%%%% SECTION SECTION SECTION
\subsection{Reduction of \eqref{e.T2:main1} to a lacunary vector-valued estimate}
In the next paragraph, we will prove the following proposition.
%%%%%%%%%%%%%%%%%%%%%%%%%%%%%% PROPOSITION PROPOSITION PROPOSITION
\begin{proposition}\label{p.tvv} Let $\v=\{v_\theta\}_{\theta\in\v}=\{e^{2\pi i \theta} \}_{\theta \in \v}$ be a finite almost horizontal $D$-lacunary set of directions. For each $\theta \in \Theta$, let $\lambda_\theta:\R^2\to (0,2^{-15}]$ be  Lipschitz functions with $\|u_\theta\|_{\mathsf{LIP}} \leq 1$ and  define
\[
\mathsf{v}_\theta:\R^2 \to S^1, \qquad \mathsf{v}_\theta(x) \coloneqq \e^{2\pi i \theta} \e^{2\pi i 2^{\lfloor \log_2 \lambda_\theta(x)\rfloor}}.
\]
For every $p\in(1,\infty)$ there exists a constant $B_p(D)$, depending only on $p$ and $D$, such that
\[
\left\|\{ H_{\mathsf v_\theta}^{} f_\theta\} \right\|_{L^p(\R^2;\,\ell^2(\Theta))} \leq B_p(D) \left\|\{   f_\theta\} \right\|_{L^p(\R^2;\,\ell^2(\Theta))}, \qquad 1<p<\infty,
\]
whenever the vector $\{f_\theta \}_{\theta \in \Theta}$ has frequency support in $\Gamma_0$. 
\end{proposition}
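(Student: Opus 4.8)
The plan is to transplant the single-annulus strategy of Guo and Thiele \cite{GuoThiele} to the vector-valued, finite-order setting, and then to close the resulting estimate with the auxiliary results of Section~2. First I would make the usual cosmetic reductions: replace each $H_{\mathsf v_\theta}$ by $H_{\mathsf v_\theta}^+$, which differs from it by a fixed multiple of the identity, and linearize. Writing $v_{m,\theta}\coloneqq v_\theta\e^{2\pi i 2^{-m}}$, there is for each $\theta$ a measurable $m_\theta\colon\R^2\to\{m\in\mathbb N:2^{-m}\le 2^{-15}\}$ with $\mathsf v_\theta(x)=v_{m_\theta(x),\theta}$, and for fixed $\theta$ the range $\{v_{m,\theta}\}_m$ is a $1$-lacunary sequence with limit $v_\theta$. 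Since $\v=\{v_\theta\}$ is $D$-lacunary and the perturbations $2^{-m}\le 2^{-15}$ are small, after the harmless splitting of \S\ref{s.conv} the union $\v'\coloneqq\bigcup_\theta\{v_{m,\theta}\}_m$ is a $(D+1)$-lacunary set; I write $R_{m,\theta}$ for the cone Fourier projections associated to the complementary arcs of $\v'$. The cones $\{C_{m,\theta}\}$ are pairwise disjoint, and for each fixed $\theta$ the union $\bigcup_m C_{m,\theta}$ lies inside the single coarse cone of $\v$ determined by $v_\theta$. I also set $g_{\mathrm{odd},\theta}\coloneqq\sum_{m\ \mathrm{odd}}R_{m,\theta}g$ and $g_{\mathrm{ev},\theta}\coloneqq\sum_{m\ \mathrm{ev}}R_{m,\theta}g$.

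Since $\widehat{f_\theta}$ is supported in $\Gamma_0$, estimate \eqref{almostradial} gives $|\xi|\simeq\xi\cdot v_\theta^\perp$ there, so the Littlewood--Paley projection $S_k^\theta$ onto $\{\xi:\xi\cdot v_\theta^\perp\simeq 2^k\}\cap\Gamma_0$ is comparable to a standard one. The heart of the proof — and the main obstacle — is the single-annulus reduction: following the $D=1$ argument of \cite[\S3]{GuoThiele} almost verbatim, but keeping all constants uniform in $\theta$ and allowing $\ell^2(\v)$-valued inputs, one should prove the vector-valued almost-orthogonality estimate
\[
\big\|\{H_{\mathsf v_\theta}^+ f_\theta\}\big\|_{L^p(\R^2;\,\ell^2(\v))}\lesssim_p\Big\|\Big\{\Big(\sum_{k\in\mathbb Z}\big|H_{\mathsf v_\theta}^+(S_k^\theta f_\theta)\big|^2\Big)^{1/2}\Big\}\Big\|_{L^p(\R^2;\,\ell^2(\v))},
\]
together with the pointwise single-annulus bound: after replacing $\mathsf v_\theta$ by the version frozen on the $2^{-k}$-grid (legitimate because the kernel of $H_{\mathsf v_\theta}^+$ applied to $S_k^\theta f_\theta$ only sees $\lambda_\theta$ at scales $\gtrsim 2^{-k}$, so $\|\lambda_\theta\|_{\mathsf{LIP}}\le 1$ forces a negligible change),
\[
\big|H_{\mathsf v_\theta}^+(S_k^\theta f_\theta)(x)\big|\lesssim\M_{v_\theta}\big[(S_k^\theta f_\theta)_{\mathrm{odd},\theta}\big](x)+\M_{v_\theta}\big[(S_k^\theta f_\theta)_{\mathrm{ev},\theta}\big](x).
\]
The first estimate is the genuinely hard input: it is precisely here that the truncation at scale $1$ and the Lipschitz bound are used, and the point is that at this single-annulus stage the $D$-lacunary structure of $\v$ plays no role whatsoever, so the Guo--Thiele argument runs uniformly over $\theta$ and extends to $\ell^2(\v)$-valued data with no change in constants. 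The second estimate is Lemma~\ref{l.lemmamf} applied to the $1$-lacunary sequence $\{v_{m,\theta}\}_m$, using $|H_{\mathsf v_\theta}^+(S_k^\theta f_\theta)|\le\sup_m|H_{v_{m,\theta}}^+(S_k^\theta f_\theta)|$ after freezing; its constants are those of Lemma~\ref{l.lemmamf}, independent of $\theta$ and of $D$.

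Granting these two inputs, the endgame is the vector-valued analogue of the computation closing \S3. Combining the displays and passing to $\ell^2(\v\times\mathbb Z)$,
\[
\big\|\{H_{\mathsf v_\theta}^+f_\theta\}\big\|_{L^p(\ell^2(\v))}\lesssim_p\sum_{\mathrm{par}\in\{\mathrm{odd},\mathrm{ev}\}}\Big\|\Big(\sum_\theta\sum_{k\in\mathbb Z}\big|\M_{v_\theta}\big[(S_k^\theta f_\theta)_{\mathrm{par},\theta}\big]\big|^2\Big)^{1/2}\Big\|_{L^p(\R^2)}.
\]
Next I would apply the Fefferman--Stein inequality of Lemma~\ref{l.FS} with $q=2$ to the $D$-lacunary family $\{v_\theta\}$ (repeated over $k$), which removes the maximal operators at the cost of the constant $K_{2,p}(D)$. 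It then remains to prove, for each parity,
\[
\Big\|\Big(\sum_\theta\sum_{k\in\mathbb Z}\big|R_{\mathrm{par},\theta}(S_k^\theta f_\theta)\big|^2\Big)^{1/2}\Big\|_{L^p(\R^2)}\lesssim_{p,D}\Big\|\Big(\sum_\theta|f_\theta|^2\Big)^{1/2}\Big\|_{L^p(\R^2)},
\]
and this follows, just as in \S3, from: the biparameter Khintchine inequality in the variables $(\theta,k)$; the cone estimate \eqref{e.SS} for the $(D+1)$-lacunary set $\v'$, in its scalar form together with the standard square-function and duality consequences (valid for all $1<p<\infty$), combined with the disjointness of the coarse cones of $\v$ and the fact that, for fixed $\theta$ and parity, $R_{\mathrm{par},\theta}$ is a single cone-union multiplier nested in the coarse cone of $v_\theta$; the one-dimensional Littlewood--Paley inequality in the direction $v_\theta^\perp$, uniformly in $\theta$; and finally the vector-valued Littlewood--Paley inequality $\|(\sum_\theta\sum_k|S_k^\theta f_\theta|^2)^{1/2}\|_p\simeq_p\|(\sum_\theta|f_\theta|^2)^{1/2}\|_p$. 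Assembling the three parts yields the proposition with a constant depending only on $p$ and $D$; to repeat, the only non-routine ingredient is the single-annulus reduction of the second paragraph, every other step being bookkeeping with the $D$- and $(D+1)$-lacunary structure and the estimates \eqref{e.direcmax}, \eqref{e.SS}, Lemma~\ref{l.lemmamf}, and Lemma~\ref{l.FS}.
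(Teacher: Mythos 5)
Your endgame (the pointwise bound via Lemma~\ref{l.lemmamf}, then Lemma~\ref{l.FS} with $q=2$, Khintchine, \eqref{e.SS} and vector-valued Littlewood--Paley) is the same bookkeeping that closes the proof of Theorem~\ref{t.D-lacunary:main}, and it would indeed finish the argument \emph{if} your first display held. But that display --- the loss-free, vector-valued reduction
\[
\big\|\{H_{\mathsf v_\theta}^+ f_\theta\}\big\|_{L^p(\R^2;\,\ell^2(\v))}\lesssim_p\Big\|\Big\{\Big(\textstyle\sum_{k}\big|H_{\mathsf v_\theta}^+(S_k^\theta f_\theta)\big|^2\Big)^{1/2}\Big\}\Big\|_{L^p(\R^2;\,\ell^2(\v))}
\]
--- is precisely the content of the proposition, and you have not proved it; you have only asserted that it follows ``almost verbatim'' from \cite{GuoThiele}. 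It does not: Guo and Thiele never establish an annular square-function reduction of this kind. Their argument (reproduced and extended in this paper) has a different architecture: the direct decomposition \eqref{megadec}--\eqref{megadec1} with the directional cutoffs $A_{\theta^\perp,\cdot}$, $B_{\theta^\perp,\cdot}$, the transition pieces $\Delta_{\theta,j,\ell}$ handled by the C\'ordoba--Fefferman inequality \eqref{CF1} along the $(D+1)$-lacunary set together with the weighted multiplier decay \eqref{LPmulti}, and then a duality/linearization of the remaining term in which the Lipschitz hypothesis on $\lambda_\theta$ enters \emph{only} through the comparison $j_\theta^+(x,t)\le j_\theta^-(x,t)+1$ separating the main term \eqref{mainterm1} from the error \eqref{mainterm2}. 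There is no intermediate statement there that you can quote for your inequality, and making such a statement uniform in $\theta$ and $\ell^2(\v)$-valued is exactly the work the proposition requires, so the citation is circular.

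Nor is there an easy substitute. The Chang--Wilson--Wolff route (Proposition~\ref{p.D-lacunary:CWW}) costs a factor $(\log N)^{1/2}$ with $N$ the number of multipliers, i.e.\ the number of scales/directions attained by $\mathsf v_\theta$, which the proposition cannot absorb since $B_p(D)$ must be independent of $\#\v$ and of the range of $j_\theta$. The Lacey--Li/Bateman--Thiele single-annulus reductions for Lipschitz fields do not apply either, because $\mathsf v_\theta$ itself is not Lipschitz: it is a dyadic rounding, and its jumps are exactly what the lacunary-range mechanism is meant to compensate. Note also that your ``freezing on the $2^{-k}$-grid'' justification is off: $H_{\mathsf v_\theta}^+g(x)$ uses $\lambda_\theta$ only at the single point $x$, and on a ball of radius $2^{-k}$ the index $j_\theta$ can change arbitrarily whenever $\lambda_\theta\lesssim 2^{-k}$; fortunately that step is unnecessary, since $\mathsf v_\theta(x)\in\{v_{m,\theta}\}_m$ pointwise makes the sup bound trivial, so the genuine gap is solely the unproved square-function reduction. (A smaller caveat: in your final step you also invoke $\ell^2(\v)$-valued versions of \eqref{e.SS}, which do not follow formally from the scalar bound and in the paper are obtained through weighted/vector-valued multiplier arguments as in Remark~\ref{LPremark} and \S\ref{s.LPmulti}.)
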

%%%%%%%%%%%%%%%%%%%%%%%%%%%%%% PROPOSITION PROPOSITION PROPOSITION

Before the proof of  Proposition \ref{p.tvv}, let us see how the proposition is used to establish \eqref{e.T2:main1}. First observe that the assumption $\lambda_{j-1}(x)\leq 2^{-5}\lambda_j(x)$ and implies that $\v_D=\mathsf{v}_D(\R^2)$ is a $D$-lacunary set which is a successor of $\v_{D-1}=\mathsf{v}_{D-1}(\R^2)$. Let us write $\v_D=\{e^{2\pi i \theta_j}\}_{j\geq 1}$ and $\v_{D-1}=\{e^{2\pi i \gamma_\tau}\}_{\tau \geq 1}$ so that $\v_D$ is a successor to $\v_{D-1}={'\v_D}$. Using the notations of \S\ref{s.conv} we can then write $\v_D=\cup_{\tau\geq 1} \v_{D,\tau}$ where each $\v_{D,\tau}$ is a $1$-lacunary set with limit $e^{2\pi i \gamma_\tau}$ and each $\v_{D,\tau}$ lives in a different complementary arc of $\v_{D-1}$. It is essential to note at this point that, if $v_j\coloneqq e^{2\pi i \theta_j}\in \v_{D,\tau}$ for some (unique) $\tau$ and $e^{2\pi i \theta_j} =\mathsf {v} _D(x)$ then $u_\tau \coloneqq e^{2\pi i \gamma_\tau}=\mathsf{v}_{D-1}(x)$ and
\[
 e^{2\pi i \theta _j} = e^{2\pi i \gamma_\tau } e^{2\pi i 2^{\lfloor \log_2 \lambda_D(x) \rfloor}}\eqqcolon e^{2\pi i \gamma_\tau } \alpha(x) .
\]
Now let $v_j\in \v_D$ so that $v_j\in \v_{D,\tau}$ for a unique $\tau\geq 1$ and let $x$ be such that $\mathsf{v}_D(x)=v_j$ and $\mathsf{v}_{D-1}(x)=u_\tau$. By the previous considerations and an inspection of the proof of Lemma~\ref{l.H^+rep} we get for such $x$ fixed
\[
\begin{split}
	|H^+ _{\mathsf v_{D}(x)}f(x)|&=|H^+ _{v_j}f(x)|\leq |H^+ _{u_\tau}f(x)|+|H^+ _{v_j }\tilde R_{\tau-1}f(x)|
	\\
	& =  |H^+ _{\mathsf{v}_{D-1}(x)} f(x)| + |H^+ _{u_\tau \alpha(x) }\tilde R_{\tau-1}f(x)|
	\\
	& \leq |H^+ _{\mathsf{v}_{D-1}(x)} f(x)| +\Big(\sum_{\tau\geq 1} |H^+ _{u_\tau \alpha(x) }\tilde R_{\tau-1}f(x)|^2\Big)^\frac{1}{2}.
\end{split}
\]
We now use the above relation to find inductively $\beta_p(D)$ so that  \eqref{e.T2:main1} holds. If $D=0$, then $H^+ _{\mathsf{v}_D}=H^+ _{\e^{2\pi i 0}}$ is the usual Hilbert transform on $\R$, therefore \eqref{e.T2:main1} holds with $\beta_p(0)\simeq\max(p,p/(p-1))$. Suppose now that $\beta_p(D-1)<\infty$. Using the previous estimate together with Proposition~\ref{p.tvv} applied to the $(D-1)$-lacunary set $\v_{D-1}=\{e^{2\pi i \gamma_\tau}\}_{\tau\geq 1 }$ and the vector field $\mathsf{v_\tau}(x)\coloneqq e^{2\pi i \gamma_\tau}\alpha(x)$, we get
\begin{equation}
\label{T2:square}
\begin{split}
\|H_{\mathsf{v}_D} ^+ f\|_p &  \leq  \|H_{\mathsf{v}_{D-1}} f\|_{L^p(\R^2)} +   \|\{H_{u_\tau \alpha(x)} (\tilde R_{\tau-1}f)\} \|_{L^p(\R^2;\,\ell^2_\tau)} 
\\ 
&\leq \beta_p(D-1)\|f\|_{L^p(\R^2)} + B_p(D-1)  \|\{  \tilde R_{\tau-1}f\} \|_{L^p(\R^2;\,\ell^2_\tau)}
\\ 
&\leq [\beta_p(D-1)+ B_p(D-1) \kappa_p(D-1) ] \| f \|_{L^p(\R^2)},
\end{split}
\end{equation}
where we applied the Sj\"ogren-Sjolin cone multiplier estimate~\eqref{e.SS} in the last inequality. So we may take $\beta_p(D)$ to be the  bracketed expression in the last display. This completes the induction in the proof of \eqref{e.T2:main1} and therefore the proof of Theorem~\ref{t.Liplac}, up to showing the vector valued estimate of Proposition \ref{p.tvv}.

%%%%%%%%%%%%%%%%%%%%%%%%%%%%%% SECTION SECTION SECTION
\subsection{Proof of  Proposition \ref{p.tvv}} All the implicit constants appearing below are allowed to depend on the lacunarity order $D$ and on exponents $p$ without explicit mention. 

%%%%%%%%%%%%%%%%%%%%%%%%%%%%%% SECTION SECTION SECTION
\subsubsection{Littlewood-Paley projections and preliminary reductions}Throughout the proof, we will adopt the following notation. Let $\phi,\psi$ be  even real Schwartz functions on $\R$ satisfying the following assumptions:  $\widehat \phi$ is supported on $\{1\leq |t|\leq 2\}$ and normalized so that
\[
\sum_{k} \widehat\phi(2^{-k}t) =1, \qquad t\in\R\setminus\{ 0\},
\]
while $\psi$ is spatially supported on $[-1,1]$ and normalized so that
\[
\int_{\R} \psi =0, \qquad \sum_{k} |\widehat\psi(2^{-k}t)|^2 =1, \qquad  t\in\R\setminus\{ 0\}.
\]
Denote by $\phi_k(\cdot)=2^k\phi(2^k\cdot),\psi_k(\cdot)\coloneqq 2^k\psi(2^k\cdot) $. For a direction $\theta \in S^1$, we then introduce the corresponding Littlewood-Paley projections in the direction $\theta$ 
\[\begin{split}
&\Phi_{\theta,k} f(x) \coloneqq \int_{\R} f(x-t \e^{2\pi i \theta }) \phi_{k}(t) \, \d t, 
\\
&\Psi_{\theta ,k} f(x) \coloneqq \int_{\R} f(x-t \e^{2\pi i \theta}) \psi_{k}(t) \, \d t, \qquad \widetilde{\Psi}_{\theta,k}\coloneqq \Psi_{\theta,k} \circ \Psi_{\theta,k}.
\end{split}
\]
We will also use the cutoffs
\[
A_{\theta,k} f(x) \coloneqq\sum_{\tau \leq k}\int_{\R} f(x-t \e^{2\pi i \theta }) \phi_{\tau}(t) \, \d t, \qquad B_{\theta,k}\coloneqq  \mathrm{Id}- A_{\theta,k} .
\]
Note that the frequency support of $A_{\theta,k} $ is contained in a band of width $2^{k+2}$ around the line through the origin oriented along $\theta$, and which acts as the identity on functions with frequency support contained in a band of width $2^{k}$ and same orientation. 
Finally we denote for $j\in\mathbb N$ and $x\in\R^2$
\[
v_{\theta,j}\coloneqq \e^{2\pi i \theta} \e^{2\pi i 2^{-j}}, \qquad  j_\theta(x)\coloneqq-\lfloor \log_2 \lambda_\theta(x) \rfloor, \qquad 
E_{\theta,j}\coloneqq  \{x\in \R^2: \, j_\theta(x) = j\}.
\] 
The assumptions on $\phi,\psi$ together with the hypothesis $\mathrm{supp}\, \widehat{f_\theta}\subset \Gamma_0$ guarantee that for all $\theta,j$
\[
f_\theta = \sum_{k\geq 5} \sum_{\ell\in \mathbb Z} \Phi_{\theta,k}\widetilde{\Psi}_{\theta^\perp,k+j+\ell} f_\theta.
\]
 
%%%%%%%%%%%%%%%%%%%%%%%%%%%%%% FIGURE FIGURE FIGURE
\begin{figure}[htb]
\centering
\def\svgwidth{290pt}
\begingroup%
  \makeatletter%
  \providecommand\color[2][]{%
    \errmessage{(Inkscape) Color is used for the text in Inkscape, but the package 'color.sty' is not loaded}%
    \renewcommand\color[2][]{}%
  }%
  \providecommand\transparent[1]{%
    \errmessage{(Inkscape) Transparency is used (non-zero) for the text in Inkscape, but the package 'transparent.sty' is not loaded}%
    \renewcommand\transparent[1]{}%
  }%
  \providecommand\rotatebox[2]{#2}%
  \ifx\svgwidth\undefined%
    \setlength{\unitlength}{335.65005378bp}%
    \ifx\svgscale\undefined%
      \relax%
    \else%
      \setlength{\unitlength}{\unitlength * \real{\svgscale}}%
    \fi%
  \else%
    \setlength{\unitlength}{\svgwidth}%
  \fi%
  \global\let\svgwidth\undefined%
  \global\let\svgscale\undefined%
  \makeatother%
  \begin{picture}(1,0.64144162)%
    \put(0.74864997,0.28127738){\color[rgb]{0,0,0}\makebox(0,0)[lt]{\begin{minipage}{0.42901825\unitlength}\raggedright \end{minipage}}}%
    \put(0.84398736,0.68646128){\color[rgb]{0,0,0}\makebox(0,0)[lb]{\smash{}}}%
    \put(0.57661593,0.98497401){\color[rgb]{0,0,0}\makebox(0,0)[lt]{\begin{minipage}{0.16684036\unitlength}\raggedright \end{minipage}}}%
    \put(0,0){\includegraphics[width=\unitlength,page=1]{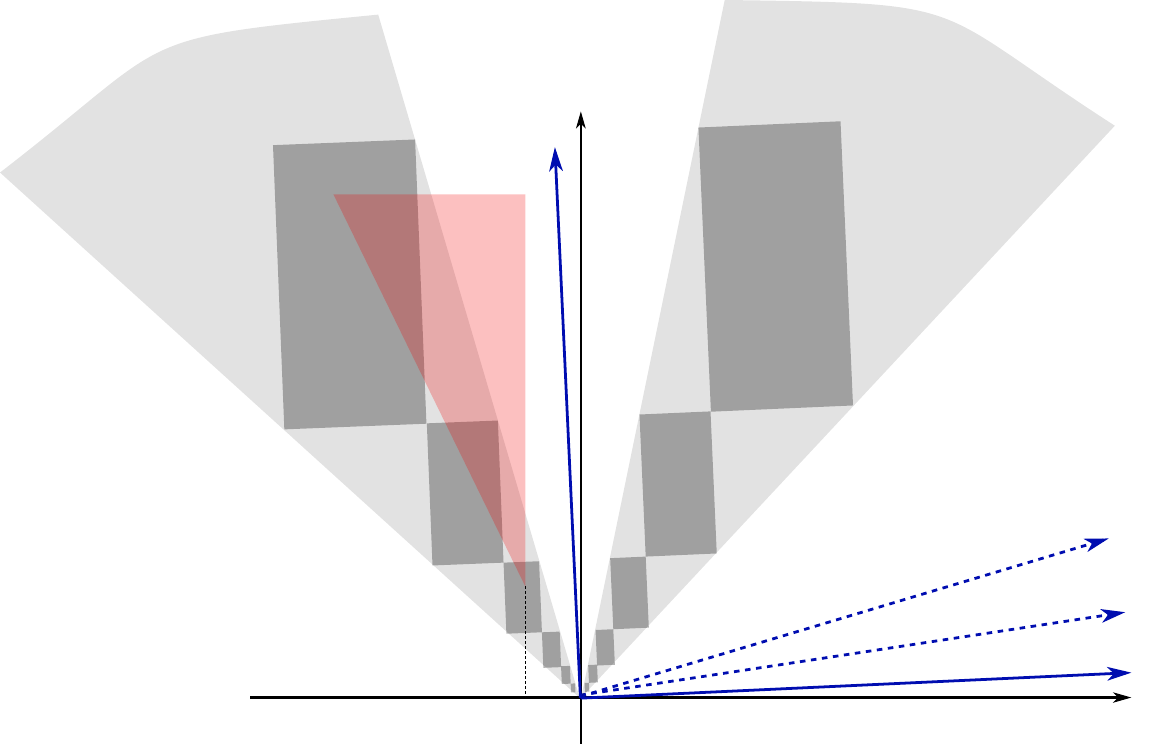}}%
    \put(0.51772062,0.52352506){\color[rgb]{0,0,0}\makebox(0,0)[lb]{\smash{$\xi_2$}}}%
    \put(0.41642232,0.00218937){\color[rgb]{0,0,0}\makebox(0,0)[lb]{\smash{$-2^6$}}}%
    \put(0.38322509,0.37112424){\color[rgb]{0,0,0}\makebox(0,0)[lb]{\smash{$\Gamma_0$}}}%
    \put(0.41801212,0.52317011){\color[rgb]{0,0,0}\makebox(0,0)[lb]{\smash{$v_\theta ^\perp$}}}%
    \put(0.64310705,0.56509254){\color[rgb]{0,0,0}\makebox(0,0)[lb]{\smash{$C_{\theta,j+\ell}$}}}%
    \put(0.94597267,0.00416752){\color[rgb]{0,0,0}\makebox(0,0)[lb]{\smash{$\xi_1$}}}%
    \put(0.97488781,0.06905327){\color[rgb]{0,0,0}\makebox(0,0)[lb]{\smash{$v_\theta$}}}%
    \put(0.95302798,0.19521427){\color[rgb]{0,0,0}\makebox(0,0)[lb]{\smash{$v_{\theta,j}$}}}%
  \end{picture}%
\endgroup%
\caption[A figure for the proof of Proposition~\ref{p.tvv}]{\small A figure for the proof of Proposition~\ref{p.tvv}. The dark shaded rectangles represent the approximate frequency supports of $\Delta_{\theta,j,\ell,k}$.}\label{f.esssup}
\end{figure}
%%%%%%%%%%%%%%%%%%%%%%%%%%%%%% FIGURE FIGURE FIGURE

%%%%%%%%%%%%%%%%%%%%%%%%%%%%%% SECTION SECTION SECTION
\subsubsection{The initial decomposition}
With the  notation above we immediately get the identity
\[
H_{\mathsf v_\theta} f_\theta = \sum_{j\geq 1} \cic{1}_{E_{\theta, j}}  H_{v_{\theta,j}} f_{\theta};
\]
introducing without loss of generality the qualitative assumption that  $\inf_{\theta } \inf_{x \in \R^2 }\lambda_\theta>0$,  we obtain that  the sum in $j$ above is over a finite range $J \subset \mathbb N$.
Proceeding as in \cite[\S3]{GuoThiele}, we obtain the decomposition
\begin{equation}
\label{megadec}
H_{\mathsf{v}_\theta(x) } f(x) = \sum_{\ell\in \mathbb Z}  H_{v_{\theta,j}} \Delta_{\theta,j,\ell} f_{\theta} (x)  + 
\sum_{k\geq 5} \sum_{\tau \geq k+j+5} \Phi_{\theta,k}\widetilde{\Psi}_{\theta^\perp,\tau}f_\theta (x), \qquad x\in {E}_{\theta,j},
\end{equation}
with
\begin{equation}
\label{megadec1}
\Delta_{\theta,j,\ell}\coloneqq \sum_{k\geq 5} \Delta_{\theta,j,\ell,k}, \qquad 
\Delta_{\theta,j,\ell,k}\coloneqq 
\begin{cases}
B_{\theta^\perp,k+j+\lfloor \ell/2\rfloor} \Phi_{\theta,k}\widetilde{\Psi}_{\theta^\perp,k+j+\ell}, & \ell \leq -5, 
\\
\Phi_{\theta,k}\widetilde{\Psi}_{\theta^\perp,k+j+\ell}, & -4\leq \ell \leq 4,
\\  
A_{\theta^\perp,k+j+\lfloor \ell/2\rfloor}  \Phi_{\theta,k}\widetilde{\Psi}_{\theta^\perp,k+j+\ell},& \ell \geq 5.  \end{cases}
\end{equation}
Replacing the disjoint  support in $j$ of $\cic{1}_{E_{\theta,j}}$ by a square function in $j$ in the first summand of the right hand side of \eqref{megadec} we get
\begin{equation}
\begin{split}
\label{megadec2} &\quad 
 \|\{ H_{\mathsf v_\theta} f_\theta\}  \|_{L^p(\R^2;\, \ell^2(\v))}
\\ 
&\qquad  \leq \sum_{\ell\in \mathbb Z}
 \|\{ H_{v_{\theta,j}} \Delta_{\theta,j,\ell}    f_\theta\}  \|_{L^p(\R^2;\,\ell^2(\v\times J ))} + \Big\| \Big\{
\sum_{k\geq 5} \sum_{\tau \geq k+j_\theta(\cdot)+5} \Phi_{\theta,k}\widetilde{\Psi}_{\theta^\perp,\tau}   f_\theta \Big\} \Big\|_{L^p(\R^2;\,\ell^2(\v))}.
	\end{split}
\end{equation}

%%%%%%%%%%%%%%%%%%%%%%%%%%%%%% REMARK REMARK REMARK
\begin{remark} \label{LPremark} For the purposes of this section we may precompose all multipliers with a smooth Fourier restriction adopted to the cone $\Gamma_{0}$.  Therefore, making use of the smoothness and  vanishing of the Fourier transform of $\psi$, and relying upon relation \eqref{almostradial}, we obtain the symbol estimates
\[
\sup_{\theta\in \v} \Big(
\sup_{|\alpha|\leq 100}
|2^{-k}\xi|^{|\alpha|} |\partial_{\xi}^{\alpha}\widehat{  {\Psi}}_{\theta^\perp,k}(\xi)|  + |2^{-k}\xi|^{-1} |\widehat{  {\Psi}}_{\theta^\perp,k}(\xi)| \Big)\lesssim 1,\qquad k\in\mathbb Z.
\]
In particular, because of the uniformity of the H\"ormander-Mikhlin condition above in $\theta,$ the vector-valued multiplier operators
\[
\ell^q \ni\{f_\theta\}_\theta \mapsto \Big\{\sum_{k} \eps_{k} {\Psi}_{\theta^\perp,k} f_\theta \Big\}_{\theta}
\]
are bounded on $L^p(\R^2;\ell^q)$, $1<p,q<\infty$, uniformly over choices of $\eps_k\in \{-1,0,1\}$. A similar reasoning applies to $\widetilde{ {\Psi}}_{\theta^\perp,k}$ and (simpler) to $ { {\Phi}}_{\theta^\perp,k}$. In fact, in view of \eqref{almostradial} we could have worked with a radial frequency projection (with or without compact support) independent of $\theta$ in place of the directional projections along $\theta^\perp$. However, certain geometric considerations made in \cite{GuoThiele} and of use to us are more naturally expressed in terms of directional frequency projections of the type ${\Psi}_{\theta^\perp,k}$.
\end{remark}
%%%%%%%%%%%%%%%%%%%%%%%%%%%%%% REMARK REMARK REMARK

%%%%%%%%%%%%%%%%%%%%%%%%%%%%%% SECTION SECTION SECTION
\subsubsection{Estimation of the first summand in \eqref{megadec2}} In order to estimate the first summand, which is an error term, we will show that each term in the sum has sufficient decay with respect to the parameter $\ell$. We will need two tools, both of which are consequences of the $L^p$-boundedness of the directional maximal function $\M_{\v'}$ along $\v'\coloneqq   \{v_{\theta,j}\}_{\theta\in\v,j\in J}$, which holds since $\v'$ is a $(D+1)$-lacunary set; see~\eqref{e.direcmax}. The first is the C\'ordoba-Fefferman inequality, \cite{CorFef}, in the form given in \cite[Theorem 6.1]{DS},
\begin{equation} \label{CF1}
\|\{ H_{v_{\theta,j}} g_{\theta,j}\}   \|_{L^p(\R^2;\,\ell^2(\v\times J ))} \lesssim \|\{   g_{\theta,j}\}   \|_{L^p(\R^2;\,\ell^2(\v\times J ))} ,
\end{equation}
which we will apply to the doubly indexed sequence $g_{\theta,j}=\Delta_{\theta,j,\ell}    f_\theta$. Further, we claim the inequality\begin{equation}
\label{LPmulti}
\|\{ \Delta_{\theta,j,\ell} f_\theta \}   \|_{L^p(\R^2;\;\ell^2(\v\times J ))}    \lesssim  \omega_\ell \| \{  f_\theta \}   \|_{L^p(\R^2;\,\ell^2(\v))},
\end{equation} 
where $\omega_\ell$ is a suitable rapidly decaying sequence. We detail the proof in \S\ref{s.LPmulti} below but sketch the heuristic here. The operators $\Delta_{\theta,\ell,j}$ are approximate restrictions to the frequency cones
\begin{equation}
\label{cones2}
C_{\theta, \mu} \coloneqq \{\xi \in \R^2:\, 2^{-\mu-1}  |\xi \cdot \e^{2\pi i \theta^\perp}|<  |\xi \cdot \e^{2\pi i \theta}| < 2^{-\mu} |\xi \cdot  \e^{2\pi i \theta^\perp} |\}
\end{equation}
for $\mu=\ell+j$, so that, up to Schwartz tails
$ \Delta_{\theta,j,\ell} f_\theta     = \Delta_{\theta,j,\ell} f_{\theta,j,\ell}$ where $f_{\theta,j,\ell}$ denotes the rough frequency restriction of $f_{\theta}$ to $C_{\theta,j+\ell }$; see Figure~\ref{f.esssup}. We will see that the $ \Delta_{\theta,j,\ell}$ (and suitable Schwartz-tail free modifications) satisfy the vector valued bound
\begin{equation}
\label{vvDelta}
\left\|\{ \Delta_{\theta,j,\ell} g_{\theta,j} \}   \right\|_{L^p(\R^2;\,\ell^2(\v\times J ))}   
 \lesssim  \omega_\ell  \left\|\{   g_{\theta,j} \}   \right\|_{L^p(\R^2;\, \ell^2(\v\times J ))} 
\end{equation}
with the extra decay $\omega_\ell$, when $\mp\ell \geq  5$, due respectively to the smallness of the symbol of $\widetilde{\Psi}_{\theta^\perp,k+j+\ell}$ on the support of $B_{\theta^\perp,k+j+\lfloor \ell/2\rfloor}$, and to the mean zero of $\widetilde{\Psi}_{\theta^\perp,k+j+\ell}$ coupled with $A_{\theta^\perp,k+j+\lfloor \ell/2\rfloor}$ being constant near zero. Finally, \eqref{LPmulti} will follow from the vector valued bound for the multipliers associated to the cones $C_{\theta,j+\ell}$. Combining \eqref{LPmulti} with \eqref{CF1} yields the required summable decay for the first term in \eqref{megadec2}.

%%%%%%%%%%%%%%%%%%%%%%%%%%%%%% SECTION SECTION SECTION
\subsubsection{Estimation of the second summand in \eqref{megadec2}} We linearize the $L^p(\R^2;\ell^2)$-norm of the second summand by multiplying and integrating against some $\{g_\theta\}_{\theta \in \v}$ with $\|\{g_\theta\}\|_{L^{p'}(\R^2;\,\ell^2(\v))}=1$. We expand one of the convolutions in $\widetilde{\Psi}_{\theta^\perp,\tau}$ and rewrite the second summand as
\[
 \sum_{\theta\in\v} \int_{\R^2} \sum_{k\geq 5} \int_{\R} \sum_{\tau \geq k+j_\theta(x)+5} \Phi_{\theta,k}{\Psi}_{\theta^\perp,\tau}   f_\theta(x-t  \e^{2\pi i \theta^\perp}) \psi_{\tau} (t)g_\theta(x) \, \d t  \d x .
\]
Now we split the expression in the last display into two parts; the main term
\begin{equation}
\label{mainterm1}
 \sum_{\theta\in\v}\int_{\R^2}  \sum_{k\geq 5} \int_{\R} \sum_{\tau \geq k+j_\theta(x-t  \e^{2\pi i \theta^\perp})+5} \Phi_{\theta,k}{\Psi}_{\theta^\perp,\tau}   f_\theta(x-t  \e^{2\pi i \theta^\perp}) \psi_{\tau} (t)g_\theta(x) \, \d t  \d x ,
\end{equation}
and the error term which up to a sign is equal to
\begin{equation}
\label{mainterm2}
 \sum_{\theta\in\v}\int_{\R^2}  \sum_{k\geq 5} \int_{\R} \sum_{   j_{\theta} ^- (x,t)  <\tau -k -5 \leq j_{\theta} ^+ (x,t) } \Phi_{\theta,k}{\Psi}_{\theta^\perp,\tau}   f_\theta(x-t  \e^{2\pi i \theta^\perp}) \psi_{\tau} (t)g_\theta(x) \, \d t  \d x ,
\end{equation}
where $j_{\theta}^\pm(x,t)\coloneqq \pm\max\{\pm j_\theta(x), \pm j_\theta(x-t  \e^{2\pi i \theta^\perp})\}$. 

%%%%%%%%%%%%%%%%%%%%%%%%%%%%%% SECTION SECTION SECTION
\subsubsection{The main term coming from \eqref{mainterm1}}In the main term we may change variables in the $\d x\d t$ integral to pass the Littlewood-Paley operator  ${\Psi}_{\theta,\tau} $ to $g_\theta$ and later exchange summation order in $\tau,k$ to obtain
\[
\sum_{\theta\in\v}\int_{\R^2}
\sum_{\tau\geq 10+j_\theta(x)}   \sum_{5\leq k \leq \tau-j_\theta(x )-5} \Phi_{\theta,k}{\Psi}_{\theta^\perp,\tau}   f_\theta(x ) \Psi_{\theta^\perp,\tau} g_\theta(x) \,   \d x. 
\]
By $\ell^2(\v)$-valued Littlewood-Paley theory, see Remark \ref{LPremark}, 
\[
\left\|\{ \Psi_{\theta^\perp,\tau} g_\theta \}   \right\|_{L^{p'}(\R^2;\,\ell^2(\mathbb Z\times\v))} \lesssim  \left\|\{  g_\theta \}   \right\|_{L^{p'}(\R^2;\,\ell^2(\v) )}=1
\]
therefore it suffices to estimate 
\begin{equation} \label{fsfinal} \begin{split}
&\Big\|\Big\{ \sup_{r\geq 0}\Big|\sum_{5\leq k \leq 5+r} \Phi_{\theta,k}{\Psi}_{\theta^\perp,\tau}   f_\theta \Big| \Big\}   \Big\|_{L^{p}(\R^2;\,\ell^2 ( \mathbb Z\times \v ) )}  \lesssim \big\|\big\{  \M_{\e^{2\pi i \theta}}({\Psi}_{\theta^\perp,\tau}   f_\theta)  \big\}   \big\|_{L^{p}(\R^2;\,\ell^2  ( \mathbb Z\times\v ) )} 
\\ 
&\qquad \qquad\lesssim \left\|\left\{{\Psi}_{\theta^\perp,\tau}f_\theta \right\}\right\|_{L^{p}(\R^2;\,\ell^2  (\mathbb Z\times \v))} \lesssim \|\{f_\theta   \}   \|_{L^{p}(\R^2;\,\ell^2(\v))}  .
\end{split}
\end{equation}
The second estimate above follows by the Fefferman-Stein inequality of Lemma~\ref{l.FS} and the last estimate follows again by the vector-valued $L^p$-bounds of Remark~\ref{LPremark}. This concludes the treatment of the main  term.

%%%%%%%%%%%%%%%%%%%%%%%%%%%%%% SECTION SECTION SECTION
\subsubsection{The error term coming from \eqref{mainterm2}}   As noticed in \cite{GuoThiele}, the support conditions on $\psi_{\tau,\theta}$ and the Lipschitz condition on $\lambda_\theta$ yields that
\[
|\lambda_\theta(x-t\e^{2\pi i \theta}) -\lambda_\theta(x) | \leq |t| \leq 2^{-\tau} \leq 2^{-j_{\theta} ^-(x,t)-10}.
\]
It is then easy to see that $j_{\theta} ^+ (x,t)\leq j_{\theta} ^- (x,t)+1 $. Hence, by changing variables in the same fashion as for the main term, the expression in \eqref{mainterm2} can be written in the form
\[
\begin{split}
& \sum_{\theta\in\v}\int_{\R^2}\sum_{10+j^- _\theta <\tau\leq 10+j^+ _\theta } \bigg(\sum_{5\leq k \leq \tau-j_\theta ^- -5} \Phi_{\theta,k}{\Psi}_{\theta^\perp,\tau}   f_\theta(x )\bigg) \Psi_{\theta^\perp,\tau} g_\theta(x) \,   \d x
\\
& \qquad +
\sum_{\theta\in\v}\int_{\R^2}\sum_{10+j^+ _\theta  <\tau} \bigg( \sum_{\tau-j_\theta ^+ -5\leq k \leq \tau-j_\theta ^- -5} \Phi_{\theta,k}{\Psi}_{\theta^\perp,\tau}   f_\theta(x )\bigg) \Psi_{\theta^\perp,\tau} g_\theta(x) \,   \d x,
\end{split}
\]
where we suppressed the dependence of $j_\theta ^\pm$ on $x,t$. These two terms are estimated by an argument similar to the one used for the main term. In particular, the estimate for the first summand reduces to the estimate
\[
 \Big\|\Big\{  \sup_{0\leq r\leq 1}\Big|\sum_{5\leq k \leq 5+r} \Phi_{\theta,k}{\Psi}_{\theta^\perp,\tau}   f_\theta \Big| \Big\}   \Big\|_{L^{p}(\R^2;\,\ell^2 ( \mathbb Z\times \v ) )}   \lesssim \|\{f_\theta   \}   \|_{L^{p}(\R^2;\,\ell^2(\v))}  
\]
which is proven in the same fashion as~\eqref{fsfinal}. The second summand reduces in turn to the estimate
\[
 \Big\|\Big\{ \sup_{\substack{r>5\\0\leq s\leq 1}}\Big|\sum_{r\leq k \leq r+s} \Phi_{\theta,k}{\Psi}_{\theta^\perp,\tau}   f_\theta \Big| \Big\}   \Big\|_{L^{p}(\R^2;\,\ell^2 ( \mathbb Z\times \v ) )}   \lesssim \|\{f_\theta   \}   \|_{L^{p}(\R^2;\,\ell^2(\v))}  
\]
which is again estimated as in~\eqref{fsfinal}. This completes the treatment of the error term and the proof of Theorem~\ref{p.tvv}.

%%%%%%%%%%%%%%%%%%%%%%%%%%%%%% SECTION SECTION SECTION
\subsubsection{Proof of \eqref{LPmulti}}\label{s.LPmulti} We begin with a further decomposition of the multipliers $\Delta_{\theta,j,\ell}$ of \eqref{megadec1}. We write
\[
\Delta_{\theta,j,\ell} = \sum_{\beta\geq 0} \Delta^{\beta}_{\theta,j,\ell}
\]
where 
\[
\Delta^{0}_{\theta,j,\ell} \coloneqq \sum_{k\geq 5} A_{\theta^\perp,k+\ell+j}\Delta_{\theta,j,\ell,k}, \qquad  \Delta^{\beta}_{\theta,j,\ell} \coloneqq \sum_{k\geq 5} \Phi_{\theta^\perp,k+\ell+j+\beta} \Delta_{\theta,j,\ell,k}, \quad \beta\geq 1.
\]
If $C_{\theta,\mu}$ is as in \eqref{cones2} and $R_{\theta,\mu}$ is the corresponding Fourier restriction, it is now easy to verify that when $\beta\geq 1$
\[
\Delta^{\beta}_{\theta,j,\ell} = \Delta^{\beta}_{\theta,j,\ell} R_{\theta,\ell+j+\beta} .
\]
The first step is the observation that for all $\eps>0$ and  weights $u$ on $\R^2$
\begin{equation}
\label{onevalue}
\|\Delta^{\beta}_{\theta,j,\ell} g\|_{L^2( u)} \lesssim C_{\eps} 2^{-\beta} \omega_{\ell} \|  g\|_{L^2( w)}, \qquad w\coloneqq(\M_{\e^{2\pi i \theta}}^{\mathsf{st}} (u^{1+\eps}))^{\frac{1}{1+\eps}},
\end{equation}
where $\M_{v}^{\mathsf{st}}$ stands for the strong maximal function in the coordinates of $v$.
This stems from the observation that the dilation of the symbol $\Delta^{\beta}_{\theta,j,\ell} $ in direction $\e^{2\pi i \theta}$ by a factor of $2^{j+\ell+\beta}$ is a standard H\"ormander-Mikhlin multiplier  with symbol estimates controlled by $\omega_\ell$
and with additional decay in $\beta$ introduced  from the Schwartz tails of $\widetilde{\Psi}_{\theta^\perp,k+\ell+j}$ on the support of $\Phi_{\theta^\perp,k+\ell+j+\beta}$. Therefore such symbol satisfies the well known analogue of \eqref{onevalue} with the Hardy-Littlewood maximal function in place of $\M_{\e^{2\pi i \theta}}^{\mathsf{st}}$, namely the C\'ordoba-Fefferman inequality \cite{CorCFef}; see also \cite{Perez94}. The estimate \eqref{onevalue} readily yields the vector valued analogue 
\[
\|\{\Delta^{\beta}_{\theta,j,\ell} g_{\theta,j}\}\|_{L^2( u;\, \ell^{2}({\Theta \times J}))} \lesssim C_{\eps} 2^{-\beta} \omega_{\ell}  \| \{g_{\theta,j}\}\|_{L^2( w; \,\ell^{2}({\v \times J}))}, \qquad w\coloneqq(\M_{\v}^{\mathsf{st}} (u^{1+\eps}))^{\frac{1}{1+\eps}}.
\]
The $L^q$-boundedness of $\M_{\v}^{\mathsf{st}}$ for all $1<q<\infty$ and standard duality reasoning yield the inequality
\[
\|\{\Delta^{\beta}_{\theta,j,\ell} g_{\theta,j}\}\|_{L^p( \R^2;\, \ell^{2}({\v \times J}))} \lesssim  2^{-\beta} \omega_{\ell}  \| \{g_{\theta,j}\}\|_{L^p(\R^2;\,  \ell^{2}({\v \times J}))}, \quad 1<p<\infty
\]
which we apply to $g_{\theta,j}= R_{\theta,\ell+j+\beta} f_\theta$, thus obtaining 
\[
\begin{split}
&\quad \|\{\Delta^{\beta}_{\theta,j,\ell} f_{\theta,j}\}\|_{L^p(  \ell^{2}({\v \times J}))} = \|\{\Delta^{\beta}_{\theta,j,\ell} R_{\theta,\ell+j+\beta} f_\theta\}\|_{L^p(  \ell^{2}({\v \times J}))} 
\\ 
& \lesssim  2^{-\beta} \omega_{\ell}  \| \{R_{\theta,\ell+j+\beta} f_\theta\}\|_{L^p(   \ell^{2}({\v \times J}))}  \lesssim 2^{-\beta} \omega_{\ell} \| \{ f_\theta\}\|_{L^p(   \ell^{2}({\v}))}.
\end{split}
\] The last step is $\ell^2(\v)$-valued Littlewood-Paley theory of the (lacunary) cone multiplier operators $\{R_{\theta,\ell+j+\beta}:j\in J\}$. Finally, a summation over $\beta$ of the above inequality yields the claimed \eqref{LPmulti} and completes the proof.
 
%%%%%%%%%%%%%%%%%%%%%%%%%%%%%% SECTION SECTION SECTION
% \bib, bibdiv, biblist are defined by the amsrefs package.
\begin{bibdiv}
\begin{biblist}

\bib{Bat}{article}{
      author={Bateman, Michael},
       title={Kakeya sets and directional maximal operators in the plane},
        date={2009},
        ISSN={0012-7094},
     journal={Duke Math. J.},
      volume={147},
      number={1},
       pages={55\ndash 77},
         url={http://dx.doi.org/10.1215/00127094-2009-006},
      review={\MR{2494456}},
}

\bib{Bat1v}{article}{
      author={Bateman, Michael},
       title={Maximal averages along a planar vector field depending on one
  variable},
        date={2013},
        ISSN={0002-9947},
     journal={Trans. Amer. Math. Soc.},
      volume={365},
      number={8},
       pages={4063\ndash 4079},
         url={http://dx.doi.org/10.1090/S0002-9947-2013-05673-5},
      review={\MR{3055689}},
}

\bib{BatThiele}{article}{
      author={Bateman, Michael},
      author={Thiele, Christoph},
       title={{$L^p$} estimates for the {H}ilbert transforms along a
  one-variable vector field},
        date={2013},
        ISSN={2157-5045},
     journal={Anal. PDE},
      volume={6},
      number={7},
       pages={1577\ndash 1600},
         url={http://dx.doi.org/10.2140/apde.2013.6.1577},
      review={\MR{3148061}},
}

\bib{Bourgain}{incollection}{
      author={Bourgain, J.},
       title={A remark on the maximal function associated to an analytic vector
  field},
        date={1989},
   booktitle={Analysis at {U}rbana, {V}ol.\ {I} ({U}rbana, {IL}, 1986--1987)},
      series={London Math. Soc. Lecture Note Ser.},
      volume={137},
   publisher={Cambridge Univ. Press, Cambridge},
       pages={111\ndash 132},
      review={\MR{1009171}},
}

\bib{CWW}{article}{
      author={Chang, S.-Y.~A.},
      author={Wilson, J.~M.},
      author={Wolff, T.~H.},
       title={Some weighted norm inequalities concerning the {S}chr\"odinger
  operators},
        date={1985},
        ISSN={0010-2571},
     journal={Comment. Math. Helv.},
      volume={60},
      number={2},
       pages={217\ndash 246},
         url={http://dx.doi.org/10.1007/BF02567411},
      review={\MR{800004}},
}

\bib{CorCFef}{article}{
      author={C\'ordoba, A.},
      author={Fefferman, C.},
       title={A weighted norm inequality for singular integrals},
        date={1976},
        ISSN={0039-3223},
     journal={Studia Math.},
      volume={57},
      number={1},
       pages={97\ndash 101},
      review={\MR{0420115}},
}

\bib{CorFef}{article}{
      author={C\'ordoba, A.},
      author={Fefferman, R.},
       title={On the equivalence between the boundedness of certain classes of
  maximal and multiplier operators in {F}ourier analysis},
        date={1977},
        ISSN={0027-8424},
     journal={Proc. Nat. Acad. Sci. U.S.A.},
      volume={74},
      number={2},
       pages={423\ndash 425},
      review={\MR{0433117}},
}

\bib{Dem}{article}{
      author={Demeter, Ciprian},
       title={Singular integrals along {$N$} directions in {$\Bbb R^2$}},
        date={2010},
        ISSN={0002-9939},
     journal={Proc. Amer. Math. Soc.},
      volume={138},
      number={12},
       pages={4433\ndash 4442},
         url={http://dx.doi.org/10.1090/S0002-9939-2010-10442-2},
      review={\MR{2680067}},
}

\bib{DDP}{article}{
      author={Demeter, Ciprian},
      author={Di~Plinio, Francesco},
       title={Logarithmic {$L^p$} bounds for maximal directional singular
  integrals in the plane},
        date={2014},
        ISSN={1050-6926},
     journal={J. Geom. Anal.},
      volume={24},
      number={1},
       pages={375\ndash 416},
         url={http://dx.doi.org/10.1007/s12220-012-9340-2},
      review={\MR{3145928}},
}

\bib{DS}{article}{
      author={Demeter, Ciprian},
      author={Silva, Prabath},
       title={Some new light on a few classical results},
        date={2015},
        ISSN={0010-1354},
     journal={Colloq. Math.},
      volume={140},
      number={1},
       pages={129\ndash 147},
         url={http://dx.doi.org/10.4064/cm140-1-11},
      review={\MR{3352435}},
}

\bib{Duo}{book}{
      author={Duoandikoetxea, Javier},
       title={Fourier analysis},
      series={Graduate Studies in Mathematics},
   publisher={American Mathematical Society, Providence, RI},
        date={2001},
      volume={29},
        ISBN={0-8218-2172-5},
        note={Translated and revised from the 1995 Spanish original by David
  Cruz-Uribe},
      review={\MR{1800316}},
}

\bib{GHS}{article}{
      author={Grafakos, Loukas},
      author={Honz{\'\i}k, Petr},
      author={Seeger, Andreas},
       title={On maximal functions for {M}ikhlin-{H}\"ormander multipliers},
        date={2006},
        ISSN={0001-8708},
     journal={Adv. Math.},
      volume={204},
      number={2},
       pages={363\ndash 378},
         url={http://dx.doi.org/10.1016/j.aim.2005.05.010},
      review={\MR{2249617}},
}

\bib{Guo}{article}{
      author={Guo, Shaoming},
       title={Hilbert transform along measurable vector fields constant on
  {L}ipschitz curves: {$L^2$} boundedness},
        date={2015},
        ISSN={2157-5045},
     journal={Anal. PDE},
      volume={8},
      number={5},
       pages={1263\ndash 1288},
         url={http://dx.doi.org/10.2140/apde.2015.8.1263},
      review={\MR{3393679}},
}

\bib{Guo1}{article}{
      author={Guo, Shaoming},
       title={Single annulus estimates for the variation-norm {H}ilbert
  transforms along {L}ipschitz vector fields},
        date={2017},
        ISSN={0002-9939},
     journal={Proc. Amer. Math. Soc.},
      volume={145},
      number={2},
       pages={601\ndash 615},
         url={http://dx.doi.org/10.1090/proc/13277},
      review={\MR{3577864}},
}

\bib{GuoThiele}{article}{
      author={Guo, Shaoming},
      author={Thiele, Christoph},
       title={Hilbert transforms along {L}ipschitz direction fields: a lacunary
  model},
        date={2017},
        ISSN={0025-5793},
     journal={Mathematika},
      volume={63},
      number={2},
       pages={351\ndash 363},
         url={http://dx.doi.org/10.1112/S0025579316000280},
      review={\MR{3607232}},
}

\bib{Karag}{article}{
      author={Karagulyan, G.~A.},
       title={On unboundedness of maximal operators for directional {H}ilbert
  transforms},
        date={2007},
        ISSN={0002-9939},
     journal={Proc. Amer. Math. Soc.},
      volume={135},
      number={10},
       pages={3133\ndash 3141},
         url={http://dx.doi.org/10.1090/S0002-9939-07-08731-X},
      review={\MR{2322743}},
}

\bib{Katz}{article}{
      author={Katz, Nets~Hawk},
       title={Maximal operators over arbitrary sets of directions},
        date={1999},
        ISSN={0012-7094},
     journal={Duke Math. J.},
      volume={97},
      number={1},
       pages={67\ndash 79},
         url={http://dx.doi.org/10.1215/S0012-7094-99-09702-8},
      review={\MR{1681088}},
}

\bib{LacLi:mem}{article}{
      author={Lacey, Michael},
      author={Li, Xiaochun},
       title={On a conjecture of {E}. {M}. {S}tein on the {H}ilbert transform
  on vector fields},
        date={2010},
        ISSN={0065-9266},
     journal={Mem. Amer. Math. Soc.},
      volume={205},
      number={965},
       pages={viii+72},
         url={http://dx.doi.org/10.1090/S0065-9266-10-00572-7},
      review={\MR{2654385}},
}

\bib{LacLi:tams}{article}{
      author={Lacey, Michael~T.},
      author={Li, Xiaochun},
       title={Maximal theorems for the directional {H}ilbert transform on the
  plane},
        date={2006},
        ISSN={0002-9947},
     journal={Trans. Amer. Math. Soc.},
      volume={358},
      number={9},
       pages={4099\ndash 4117},
         url={http://dx.doi.org/10.1090/S0002-9947-06-03869-4},
      review={\MR{2219012}},
}

\bib{NSW}{article}{
      author={Nagel, A.},
      author={Stein, E.~M.},
      author={Wainger, S.},
       title={Differentiation in lacunary directions},
        date={1978},
        ISSN={0027-8424},
     journal={Proc. Nat. Acad. Sci. U.S.A.},
      volume={75},
      number={3},
       pages={1060\ndash 1062},
      review={\MR{0466470}},
}

\bib{Perez94}{article}{
      author={P\'erez, C.},
       title={Weighted norm inequalities for singular integral operators},
        date={1994},
        ISSN={0024-6107},
     journal={J. London Math. Soc. (2)},
      volume={49},
      number={2},
       pages={296\ndash 308},
         url={http://dx.doi.org/10.1112/jlms/49.2.296},
      review={\MR{1260114}},
}

\bib{ST}{article}{
      author={{Saari}, O.},
      author={{Thiele}, C.},
       title={{Lipschitz linearization of the maximal hyperbolic cross
  multiplier}},
        date={2017-01},
     journal={ArXiv e-prints},
      eprint={1701.05093},
}

\bib{SS}{article}{
      author={Sj\"ogren, P.},
      author={Sj\"olin, P.},
       title={Littlewood-{P}aley decompositions and {F}ourier multipliers with
  singularities on certain sets},
        date={1981},
        ISSN={0373-0956},
     journal={Ann. Inst. Fourier (Grenoble)},
      volume={31},
      number={1},
       pages={vii, 157\ndash 175},
         url={http://www.numdam.org/item?id=AIF_1981__31_1_157_0},
      review={\MR{613033}},
}

\bib{STP}{inproceedings}{
      author={Stein, E.~M.},
       title={Problems in harmonic analysis related to curvature and
  oscillatory integrals},
        date={1987},
   booktitle={Proceedings of the {I}nternational {C}ongress of
  {M}athematicians, {V}ol. 1, 2 ({B}erkeley, {C}alif., 1986)},
   publisher={Amer. Math. Soc., Providence, RI},
       pages={196\ndash 221},
      review={\MR{934224}},
}

\bib{StStr}{article}{
      author={Stein, Elias~M.},
      author={Street, Brian},
       title={Multi-parameter singular {R}adon transforms {III}: {R}eal
  analytic surfaces},
        date={2012},
        ISSN={0001-8708},
     journal={Adv. Math.},
      volume={229},
      number={4},
       pages={2210\ndash 2238},
         url={http://dx.doi.org/10.1016/j.aim.2011.11.016},
      review={\MR{2880220}},
}

\end{biblist}
\end{bibdiv}

\bibliographystyle{amsplain}
\end{document}